\newtheorem{lemma}{Lemma}[section]
\newtheorem{theorem}{Theorem}[section]
\newtheorem{definition}{Definition}[section]
\def\f{\mathbf{f}}
\def\s{\mathbf{s}}
\title{2DNMR data inversion using locally adapted multi-penalty regularization}
\author{V. Bortolotti, G. Landi, F. Zama}
\begin{document}

\maketitle
\begin{abstract}
A crucial issue in two-dimensional Nuclear Magnetic Resonance (NMR) is the speed and accuracy of the data inversion.
This paper proposes a multi-penalty method with locally adapted regularization parameters
for fast and accurate inversion of 2DNMR data.

The method solves an unconstrained optimization problem whose objective contains a data-fitting term, a single  $L1$ penalty parameter and a multiple parameter $L2$ penalty. 
We propose an adaptation of the Fast Iterative Shrinkage and Thresholding (FISTA)  method to solve the multi-penalty minimization problem, and an automatic procedure to compute all the penalty parameters.
This procedure generalizes the Uniform Penalty principle introduced in [Bortolotti et al., \emph{Inverse Problems}, 33(1), 2016].  

The proposed  approach allows us to obtain  accurate relaxation time distributions while keeping short the 
computation time. Results of numerical
experiments on synthetic and real data prove that
the proposed method is efficient and effective in reconstructing the peaks and the flat regions
that usually characterize NMR relaxation time distributions.
\end{abstract}

%
\section{Introduction}
The inversion of Nuclear Magnetic Resonance (NMR) relaxation data of $^1$H nuclei is a crucial technique to analyze the structure of porous media, ranging from cement to biological systems.
In 2DNMR, joint measurements of the spin relaxation with respect to the longitudinal and transverse relaxation parameters $T_1$ and $T_2$
allow us to build two-dimensional relaxation time distributions. Peaks usually characterize such distributions over flat regions; the position and
volume of the peaks are used to obtain information such as petrophysical properties, molecular diffusion,  \cite{MITCH2014}.
The measured NMR data are related to the
relaxation time distribution according to a Fredholm integral equation
of the first kind with separable exponential kernel. Due to the large dimension of the data and the inherent ill-posedness of the inverse problem, a significant issue in 2DNMR inversion is to ensure both computational efficiency and accuracy.
This aspect is particularly relevant in multidimensional logging where 3DNMR inversion algorithms are usually based on methods for 2DNMR inversion. Therefore, the development and application of 3DNMR techniques is seriously restricted by the efficiency and accuracy of the 2D inversion \cite{ZHANG2019174}.

In a discrete setting, the 2D Fredholm integral equation can be modeled as a linear inverse problem
\begin{equation}
\mathbf{K} \mathbf{f} + \mathbf{e}= \mathbf{s}
\label{mod1}
\end{equation}
where $\mathbf{K}= \mathbf{K}_2 \otimes \mathbf{K}_1$ is the Kronecker product of the discretized
decaying exponential kernels $\mathbf{K}_1\in \mathbb{R}^{M_1 \times N_1}$ and $\mathbf{K}_2\in \mathbb{R}^{M_2 \times N_2}$ .
The vector  $\mathbf{s} \in \mathbb{R}^{M}$, $M=M_1 \cdot M_2$, represents
the measured noisy signal, $\mathbf{f} \in   \mathbb{R}^{N}$, $N=N_1 \cdot N_2$, is the vector
reordering of the 2D distribution to be computed and $ \mathbf{e} \in  \mathbb{R}^{M}$ represents the
additive Gaussian noise.
The severe ill-conditioning of $\mathbf{K} $ is well-known, and it causes the least-squares solution of \eqref{mod1} to be
extremely sensitive to the noise; for this reason, regularization is usually applied.
The most common numerical strategies are based on $L2$ regularization
and often use constraints, such as non-negativity constraints, in order to prevent unwanted distortions
in the computed distribution. This approach requires
solving the nonnegatively constrained Tikhonov-like problem:
\begin{equation}
  \min_{\f\geq 0} \left \{  \| \mathbf{K} \f - \s \|^2 +\lambda \| \f \|^2 \right \}
  \label{eq:L2}
\end{equation}
where $\lambda>0$ is the regularization parameter  
and $\| \cdot \|$ denotes the Euclidean norm.
In this context, the approach of Venkataramanan et al. \cite{Venta_Song_2002},
uses data compression to reduce the size of problem \eqref{eq:L2} and the Butler–Reeds–Dawson method \cite{BRD_1981} to solve the smaller-size optimization problem. Chouzenoux et al. \cite{Chouzenoux} apply the interior point method for the solution of \eqref{eq:L2}.
The main drawback of single parameter $L2$ regularization is its tendency to either over-smooth the solution, making it difficult
to detect low-intensity peaks, or to under-smooth the solution creating non-physical sharp peaks.
Substantial improvements are obtained by the application of multiple parameters Tikhonov
regularization, as in the 2DUPEN algorithm \cite{Bortolotti_2016, Bortolotti2019} which solves the minimization problem
\begin{equation}\label{eq:multiTikh}
  \min_{\f \geq 0} \left \{ \| \mathbf{K} \f - \s \|^2 + \sum_{i=1}^N \lambda_i(\mathbf{L} \mathbf{f})^2_i\right \}
\end{equation}
where $\mathbf{L}\in \mathbb{R}^{N}$ is the discrete Laplacian operator. The multiple regularization parameters $\lambda_i$s are locally adapted, i.e., at each iteration,
approximated values for the $\lambda_i$s are computed by imposing the
Uniform Penalty (UPEN) principle \cite{Bortolotti_2016}
and a constrained subproblem is solved by the
Newton Projection method \cite{ber82Siam}.
Although 2DUPEN can obtain very accurate distributions, as reported in the literature  \cite{Bortolotti_2016,Bortolotti_2017,micro_meso2018,Bortolotti2019}, its computational cost may be high
since it requires the solution of several nonnegatively constrained least-squares problems.

In the NMR literature, $L1$ regularization has been recently considered in order to better reproduce the characteristic sparsity of the relaxation distribution. In \cite{ZHOU_2017}, the $L1$ regularization problem
\begin{equation}\label{eq:L1}
  \min_{\f} \left \{  \| \mathbf{K} \f - \s \|^2 +\alpha \| \f \|_1 \right \}
\end{equation}
is considered and the Fast Iterative Shrinkage-Thresholding Algorithm (FISTA) \cite{Beck_Teb_2013} is used for its solution.
An update searching method is proposed to iteratively determine the
regularization parameter as
$\alpha = \sqrt{N}\sigma/\|\f\|_1$
where $\sigma$ is the standard deviation of the noise. 
We remark that FISTA is known to be one of the most effective and efficient methods for solving $L1$-based image denoising and deblurring problems. Recently, FISTA has also been applied to non-convex regularization \cite{AMC19, Lazzaro_2019}.

The $L1$ regularization has also been used in NMR to decrease the data acquisition time  \cite{WuDag2014}. In \cite{Teal_2015}, an algorithm related to FISTA is applied to
NMR relaxation estimation and comparisons
with the methods of Venkataramanan et al. \cite{Venta_Song_2002}, and Chouzenoux et al. \cite{Chouzenoux}, are carried out showing the efficiency of the FISTA-like method.
However, despite its computational efficiency and its capability of revealing isolated narrow peaks,  $L1$ regularization tends to divide a wide peak or tail into separate undesired peaks.

Recently, the elastic net method \cite{ZouHastle2005} with a non-negative constraint
\begin{equation}\label{eq:elasticnet}
  \min_{\f \geq 0} \left \{  \| \mathbf{K} \f - \s \|^2 +\lambda \| \f \|^2 +\alpha \| \f \|_1 \right \}
\end{equation}
has been used in \cite{Berman_et_al_2013} to obtain
$T2$ distributions. The problem is formulated as
a linearly constrained convex optimization
problem and the primal-dual interior method for convex objectives has been applied to one dimensional \cite{Berman_et_al_2013} and two dimensional \cite{Campisi-Pinto2018} NMR relaxation problems.
However, the performance of the method depends on the
two regularization parameters which needs an accurate tuning; a parameter selection analysis is performed in \cite{Campisi-Pinto2018} for a specific set of 2DNMR data.

Our previous review of the current literature shows that for each inversion method, we have to take into account both the efficiency and the accuracy. The 2DUPEN method has a great inversion accuracy due to the employment of multi-penalty regularization with locally adapted parameters, but the nonnegative constraints are responsible for its poor computational efficiency. The $L1$ regularization with FISTA algorithm is computationally very efficient, but its accuracy can be low in the presence of non-isolated peaks. Multi-penalty regularization \eqref{eq:elasticnet} is able to simultaneously promote
distinct features of the sought-for distribution, since it yields a good
trade-off among data fitting error, sparsity and smoothness
of the solution. However, its applicability is greatly limited by the fact that multiple parameters tuning is a challenging task depending
on SNR, sparsity and smoothness.
To overcome the aforementioned drawbacks   ensuring both efficiency and accuracy, in this paper, we propose a multi-penalty approach involving $L1$ and $L2$ penalties with locally adapted regularization parameters. The proposed method can be mathematically formulated as the unconstrained minimization problem
\begin{equation}
  \min_{\mathbf{f}} \left \{ \|  \mathbf{K} \mathbf{f} - \mathbf{s} \|^2 +\sum_{i=1}^N \lambda_i(\mathbf{L} \mathbf{f})^2_i+ \alpha \|\mathbf{f}\|_1 \right \} .
  \label{eq:uno}
\end{equation}
This approach allows us to accurately reconstruct distributions with isolated and non-isolated peaks as well as flat areas, in short computation time. On the one hand, $L1$ regularization prevents from over-smoothing while, on the other hand, local $L2$ regularization prevents from under-smoothing merging peaks or peak tails. Since $L1$ regularization enforces sparse distributions, the nonnegative constraints are not included in problem \eqref{eq:uno} and FISTA can be used for its efficient and effective solution.
The UPEN principle is extended to the multi-penalty problem \eqref{eq:uno} obtaining a very efficient computation strategy for all the regularization parameters. Therefore, tedious multiple parameters tuning procedure is not necessary.

The contribution of this paper is two-fold. Firstly, it introduces a locally adapted multi-penalty model for NMR data inversion, and it proposes an efficient strategy for the automatic computation of the multiple parameters. Secondly, we prove that the solution of \eqref{eq:uno} is a regularized solution of problem \eqref{mod1}.
The extension of the regularization properties of the UPEN  principle to a
multiple regularization context makes it possible to apply it to more general, non-differentiable and possibly non-convex penalties.

The proposed algorithm has been tested on both synthetic and real NMR relaxometry problems, and has been compared to multiple parameters $L2$ regularization \eqref{eq:multiTikh} (2DUPEN) and to $L1$ regularization \eqref{eq:L1}. The numerical results show the efficiency and effectiveness of the method.

The remainder of the paper is organized as follows: section \ref{principle} analyzes the regularization properties of the proposed method. Section \ref{Upen-L1} reports the details of the numerical algorithm. Finally, in section \ref{num-res}, some results are shown and discussed both on synthetic and real NMR data.
A crucial issue in two-dimensional Nuclear Magnetic Resonance (NMR) is the speed and accuracy of the data inversion.
This paper proposes a multi-penalty method with locally adapted regularization parameters
for fast and accurate inversion of 2DNMR data.

The method solves an unconstrained optimization problem whose objective contains a data-fitting term, a single  $L1$ penalty parameter and a multiple parameter $L2$ penalty. 
We propose an adaptation of the Fast Iterative Shrinkage and Thresholding (FISTA)  method to solve the multi-penalty minimization problem, and an automatic procedure to compute all the penalty parameters.
This procedure generalizes the Uniform Penalty principle introduced in [Bortolotti et al., \emph{Inverse Problems}, 33(1), 2016].  

The proposed  approach allows us to obtain  accurate relaxation time distributions while keeping short the 
computation time. Results of numerical
experiments on synthetic and real data prove that
the proposed method is efficient and effective in reconstructing the peaks and the flat regions
that usually characterize NMR relaxation time distributions.
\section{The Uniform Penalty Principle \label{principle}}
In order to generalize to multi-penalty regularization the UPEN principle introduced in \cite{Bortolotti_2016} for problem \eqref{eq:multiTikh}, let us write problem \eqref{eq:uno} as
\begin{equation}\label{eq:multipen}
  \min_{\mathbf{f}} \left \{ \|  \mathbf{K} \mathbf{f} - \mathbf{s} \|^2 +\sum_{i=1}^{N+1} \eta_i\phi_i(\f) \right \}
\end{equation}
where
\begin{equation}\label{}
  \phi_i(\f)=\left\{
               \begin{array}{ll}
                 (\mathbf{L} \mathbf{f})^2_i, & i=1,\ldots,N, \\
                 \|\f\|_1, & i=N+1,
               \end{array}
             \right.
\quad \text{and} \quad
  \eta_i=\left\{
               \begin{array}{ll}
                 \lambda_i, & i=1,\ldots,N, \\
                 \alpha, & i=N+1.
               \end{array}
             \right.
\end{equation}
%
The generalization of the UPEN principle can be stated as follows.
\begin{definition}[Generalized Uniform Penalty Principle] \label{eq:UPP2_def} Choose the regularization parameters $\eta_i$
of multi-penalty regularization \eqref{eq:multipen} such that, at a solution $\mathbf{f}$,
the terms $\eta_i\phi_i(\f)$ are constant for all $i$ with
$\phi_i(\f)\neq0$, i.e:
\begin{equation}\label{eq:UPP2}
    \eta_i\phi_i(\f) = c, \quad \forall \; i=1,\ldots,N+1 \quad \text{s.t.} \quad \phi_i(\f)\neq 0
\end{equation}
where $c$ is a positive constant.
\end{definition}
Let us assume that a suitable bound $\varepsilon$ on the fidelity term of the exact solution $\mathbf{f}^*$ is given; i.e:
\begin{equation}\label{miller1}
    \|\mathbf{K} \mathbf{f}^* - \mathbf{s} \|^2 \leq \varepsilon^2
\end{equation}
where $\mathbf{f}^*$ is the solution of the noise-free least-squares problem
\begin{equation}\label{eq:noisefree}
\min_\f \left \{ \|\mathbf{K} \mathbf{f} - \hat{\mathbf{s}} \|^2, \; \mathbf{s} = \hat{\mathbf{s}}+\mathbf{e}\right\} .
\end{equation}
Following the Miller's criterium \cite{miller1970}, the constant $c$ is selected to balance the fidelity and regularization terms in \eqref{eq:multipen}; i.e:
\begin{equation}\label{eq:C}
    c=\frac{\varepsilon^2}{N_0}
\end{equation}
where $N_0$ is the number of non null terms $\phi_i(\f)$:
\begin{equation}\label{}
  N_0 = \# \{i \; | \; \phi_i(\f) \neq 0, \; i=1,\ldots,N+1\} .
\end{equation}
Obviously, with this choice for $c$, Lemma 3.1 of \cite{Bortolotti_2016}  still applies. The lemma is restated here for the sake of clarity.
\begin{lemma}
If $\mathbf{f}$ satisfies $\|\mathbf{K} \mathbf{f} - \mathbf{s} \|^2\leq\varepsilon^2$ and
the parameters $\eta_i$, $i=1,\ldots,N+1$,  are chosen according to the generalized uniform penalty principle with
\begin{equation}\label{eq:C_bis}
    c=\frac{\varepsilon^2}{N_0}
\end{equation}
where $N_0$ is the number of non null terms $\phi_i(\f)$, then
\begin{equation}\label{eq:2epsilon}
    \|\mathbf{K} \mathbf{f} - \mathbf{s} \|^2  + \sum_{i=1}^{N_0} \eta_i\phi_i(\f) \leq 2\varepsilon^2.
\end{equation}
Conversely, if $\mathbf{f}$ satisfies \eqref{eq:2epsilon} and the generalized UPEN principle with
\eqref{eq:C_bis}, then it  also satisfies $\|\mathbf{K} \mathbf{f} - \mathbf{s} \|^2\leq\varepsilon^2$.
\end{lemma}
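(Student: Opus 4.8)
The plan is to observe that the generalized uniform penalty principle forces the entire regularization sum to collapse to a single explicit value, after which both directions follow from elementary arithmetic. The key preliminary remark is that the indices with $\phi_i(\f)=0$ contribute nothing to the penalty sum, since $\eta_i\phi_i(\f)=0$ for those indices regardless of the value of $\eta_i$. Hence the sum over all $N+1$ terms coincides with the sum restricted to the $N_0$ non-null terms, and it is this restricted sum that appears in \eqref{eq:2epsilon}.

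With this in hand, the first step is to evaluate the penalty sum under the principle \eqref{eq:UPP2}. By definition each non-null term satisfies $\eta_i\phi_i(\f)=c$, so summing over exactly the $N_0$ indices for which $\phi_i(\f)\neq0$ gives
\begin{equation}\label{eq:sumexact}
\sum_{i : \phi_i(\f)\neq 0} \eta_i\phi_i(\f) = N_0\, c = N_0\cdot\frac{\varepsilon^2}{N_0} = \varepsilon^2,
\end{equation}
where the choice \eqref{eq:C_bis} of $c$ has been used. This identity, not an inequality, is the crux of the argument: the UPEN principle together with Miller's balancing choice of $c$ pins the total regularization energy to precisely $\varepsilon^2$.

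For the forward direction I would simply add the hypothesis $\|\mathbf{K}\f-\s\|^2\leq\varepsilon^2$ to the identity \eqref{eq:sumexact}, obtaining $\|\mathbf{K}\f-\s\|^2+\sum_i\eta_i\phi_i(\f)\leq\varepsilon^2+\varepsilon^2=2\varepsilon^2$, which is \eqref{eq:2epsilon}. For the converse, I would again invoke \eqref{eq:sumexact} to replace the penalty sum in \eqref{eq:2epsilon} by $\varepsilon^2$ and then rearrange, yielding $\|\mathbf{K}\f-\s\|^2\leq 2\varepsilon^2-\varepsilon^2=\varepsilon^2$. Both steps are symmetric and use \eqref{eq:sumexact} in opposite directions.

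There is no genuine analytic obstacle here; the result is a direct restatement of Lemma 3.1 of \cite{Bortolotti_2016} in the multi-penalty setting, and the generalized principle was tailored precisely so that the original proof carries over verbatim. The only point requiring a little care is the bookkeeping of null versus non-null indices, so that the summation ranges in \eqref{eq:UPP2}, \eqref{eq:sumexact} and \eqref{eq:2epsilon} are consistently understood as running over the $N_0$ terms with $\phi_i(\f)\neq0$; once that convention is fixed, the equality \eqref{eq:sumexact} and hence the lemma are immediate.
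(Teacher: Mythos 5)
Your proof is correct and follows essentially the same route as the paper's: both evaluate the penalty sum exactly as $N_0\cdot(\varepsilon^2/N_0)=\varepsilon^2$ via the UPEN principle and the choice of $c$, then conclude each direction by elementary arithmetic. Your explicit remark that null indices contribute nothing and that the sum collapses to an identity is a slightly cleaner presentation of the same argument.
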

\begin{proof}
Let $\mathbf{f}$ be such that $\|\mathbf{K} \mathbf{f} - \mathbf{s} \|^2\leq\varepsilon^2$, then, if
\eqref{eq:UPP2} holds with $c$ selected as in \eqref{eq:C_bis},  we have
\begin{equation}
    \|\mathbf{K} \mathbf{f} - \mathbf{s} \|^2  + \sum_{i=1}^{N_0} \eta_i\phi_i(\f)
    \leq
    \varepsilon^2  + \sum_{i=1}^{N_0} \frac{\varepsilon^2}{N_0} = 2\varepsilon^2.
\end{equation}
Conversely, if \eqref{eq:2epsilon} and  \eqref{eq:C_bis} hold, then
\begin{equation}
    2\varepsilon^2 \geq \|\mathbf{K} \mathbf{f} - \mathbf{s} \|^2  + \sum_{i=1}^{N_0} \eta_i\phi_i(\f)
    = \|\mathbf{K} \mathbf{f} - \mathbf{s} \|^2   + \sum_{i=1}^{N_0} \frac{\varepsilon^2}{N_0} = \|\mathbf{K} \mathbf{f} - \mathbf{s} \|^2+\varepsilon^2.
    \end{equation}
\end{proof}
From \eqref{eq:UPP2} and \eqref{eq:C} we obtain the following expression for the $\eta_i$'s:
\begin{equation}\label{}
  \eta_i = \frac{\varepsilon^2}{N_0\phi_i(\f)} \quad \text{for all} \quad i=1,\ldots,N+1 \quad \text{such that} \quad \phi_i(\f)\neq 0
\end{equation}
which can be written in terms of the parameters $\lambda_i$ and $\alpha$ as
\begin{equation}\label{eq:parametri}
  \lambda_i = \frac{\varepsilon^2}{N_0 (\mathbf{L}\f)_i^2} \;\text{ if }\; (\mathbf{L}\f^*)_i\neq 0 \quad \text{and} \quad
  \alpha = \frac{\varepsilon^2}{N_0\| \f\|_1}.
\end{equation}
If the regularization parameters are computed as in \eqref{eq:parametri}, the following lemma shows that the solution of \eqref{eq:uno} is a regularized solution of \eqref{mod1}.
%
%
\begin{lemma}
Let  $\mathbf{f}^*$ be the solution of the noise-free least-squares problem \eqref{eq:noisefree} and let $\mathbf{f}_{\varepsilon}$ denote the solution to problem \eqref{eq:uno} where the regularization parameters are chosen according to the generalized uniform penalty principle as follows:
\begin{equation}\label{eq:par}
 \lambda_i = \left\{
               \begin{array}{ll}
                 \displaystyle{\frac{\varepsilon^2}{N_0 (\mathbf{L}\f^*)_i^2}}, & \hbox{if $(\mathbf{L}\mathbf{f}^*)_i\neq0$;} \\
                 \gamma \varepsilon^2, & \hbox{otherwise;}
               \end{array}
             \right.
\quad \text{and} \quad
\alpha = \frac{\varepsilon^2}{ \| \f^*\|_1}
 \end{equation}
where $\gamma$ is a positive constant and $N_0$ is the number of non null terms $(\mathbf{L}\f^*)_i$.
Then
\begin{equation*}
  \lim_{\varepsilon \rightarrow 0}\mathbf{f}_{\varepsilon}= \mathbf{f}^*
\end{equation*}
and hence $\mathbf{f}_{\varepsilon}$ is a regularized solution of \eqref{mod1}.
\end{lemma}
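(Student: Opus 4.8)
The plan is to combine a minimality comparison at $\mathbf{f}^*$ with a compactness argument in the finite-dimensional space $\mathbb{R}^N$, and then to identify the limit of the resulting subsequence. Write the objective of \eqref{eq:uno} under the parameter choice \eqref{eq:par} as $J_\varepsilon(\f)=\|\mathbf{K}\f-\s\|^2+\sum_{i=1}^N\lambda_i(\mathbf{L}\f)_i^2+\alpha\|\f\|_1$. Since $\mathbf{f}_\varepsilon$ is a minimizer, $J_\varepsilon(\mathbf{f}_\varepsilon)\le J_\varepsilon(\mathbf{f}^*)$, so the whole argument rests on controlling $J_\varepsilon(\mathbf{f}^*)$.

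First I would evaluate $J_\varepsilon(\mathbf{f}^*)$ using the explicit parameters in \eqref{eq:par}. The key observation is that each penalty telescopes: for every index with $(\mathbf{L}\mathbf{f}^*)_i\neq0$ one has $\lambda_i(\mathbf{L}\mathbf{f}^*)_i^2=\varepsilon^2/N_0$, while the indices with $(\mathbf{L}\mathbf{f}^*)_i=0$ contribute zero, so $\sum_i\lambda_i(\mathbf{L}\mathbf{f}^*)_i^2=\varepsilon^2$; likewise $\alpha\|\mathbf{f}^*\|_1=\varepsilon^2$. Together with the fidelity bound \eqref{miller1} this gives $J_\varepsilon(\mathbf{f}^*)\le3\varepsilon^2$, hence $J_\varepsilon(\mathbf{f}_\varepsilon)\le3\varepsilon^2$. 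Discarding nonnegative terms from this single inequality yields the two facts I need: the residual bound $\|\mathbf{K}\mathbf{f}_\varepsilon-\s\|^2\le3\varepsilon^2$ and, since $\alpha\|\mathbf{f}_\varepsilon\|_1\le3\varepsilon^2$ with $\alpha=\varepsilon^2/\|\mathbf{f}^*\|_1$, the uniform bound $\|\mathbf{f}_\varepsilon\|_1\le3\|\mathbf{f}^*\|_1$.

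Next I would exploit this uniform bound. Because $\{\mathbf{f}_\varepsilon\}$ is bounded in $\mathbb{R}^N$, every sequence $\varepsilon_k\to0$ admits a subsequence along which $\mathbf{f}_{\varepsilon_k}\to\bar{\mathbf{f}}$. From the residual bound and $\|\s-\hat{\s}\|\le\varepsilon$ (a consequence of \eqref{miller1} together with $\mathbf{K}\mathbf{f}^*=\hat{\s}$ in the consistent case) I get $\|\mathbf{K}\mathbf{f}_\varepsilon-\hat{\s}\|\to0$, so by continuity $\mathbf{K}\bar{\mathbf{f}}=\hat{\s}$; that is, $\bar{\mathbf{f}}$ is itself a solution of the noise-free least-squares problem \eqref{eq:noisefree}. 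Once I show $\bar{\mathbf{f}}=\mathbf{f}^*$, the standard subsequence principle (every subsequence has a further subsequence converging to the same $\mathbf{f}^*$) upgrades this to convergence of the full family $\mathbf{f}_\varepsilon\to\mathbf{f}^*$.

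The hard part will be the identification $\bar{\mathbf{f}}=\mathbf{f}^*$. Writing the total penalty as $\varepsilon^2\Psi(\f)$, the normalization in \eqref{eq:par} makes $\Psi(\mathbf{f}^*)=2$, and the minimality inequality gives only $\Psi(\mathbf{f}_\varepsilon)\le\|\s-\hat{\s}\|^2/\varepsilon^2+\Psi(\mathbf{f}^*)\le1+\Psi(\mathbf{f}^*)$, so in the limit $\Psi(\bar{\mathbf{f}})\le1+\Psi(\mathbf{f}^*)$ rather than the sharp $\Psi(\bar{\mathbf{f}})\le\Psi(\mathbf{f}^*)$ one would want for a clean penalty-minimality characterization; the slack arises because the noise term is only $O(\varepsilon^2)$, exactly the order of the regularization. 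I would close this gap in one of two ways: either invoke injectivity of $\mathbf{K}$ on the solution set, so that \eqref{eq:noisefree} has the unique solution $\mathbf{f}^*$ and $\mathbf{K}\bar{\mathbf{f}}=\mathbf{K}\mathbf{f}^*$ forces $\bar{\mathbf{f}}=\mathbf{f}^*$ at once; or, in the rank-deficient case, strengthen the noise assumption to $\|\s-\hat{\s}\|^2=o(\varepsilon^2)$ so that the slack vanishes and $\bar{\mathbf{f}}$ is forced to be the (assumed unique) $\Psi$-minimal solution of \eqref{eq:noisefree}, namely $\mathbf{f}^*$. I expect the selection of $\mathbf{f}^*$ among all noise-free least-squares solutions to be the genuine obstacle, whereas the boundedness and residual estimates are routine.
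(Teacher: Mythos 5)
Your proof is correct in substance but takes a genuinely different route from the paper's. The paper argues entirely through first-order optimality conditions: it writes the subgradient inclusion $\mathbf{0} \in 2\mathbf{K}^T(\mathbf{K}\f - \s) + 2\mathbf{L}^T\mathbf{\Lambda}\mathbf{L}\f + \alpha\mathbf{g}$ for \eqref{eq:uno}, observes that the choice \eqref{eq:par} makes every $\lambda_i$ and $\alpha$ proportional to $\varepsilon^2$, and concludes that as $\varepsilon\to0$ the inclusion collapses to $\mathbf{0}=2\mathbf{K}^T(\mathbf{K}\f-\s)$, read as the normal equations of the noise-free problem. That argument is formal: it passes to the limit in a condition evaluated at the moving point $\f_\varepsilon$ without establishing that $\{\f_\varepsilon\}$ is bounded (needed for $\mathbf{L}^T\mathbf{\Lambda}\mathbf{L}\f_\varepsilon\to\mathbf{0}$), it silently replaces $\s$ by $\hat{\s}$ (harmless, since \eqref{miller1} forces $\|\s-\hat{\s}\|\le\varepsilon$ in the consistent case), and it does not say why the limit point is $\f^*$ rather than some other solution of the normal equations. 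Your variational argument supplies exactly these missing pieces: the comparison $J_\varepsilon(\f_\varepsilon)\le J_\varepsilon(\f^*)\le 3\varepsilon^2$ yields the uniform bound $\|\f_\varepsilon\|_1\le 3\|\f^*\|_1$ --- this is where the $L1$ term earns its keep --- compactness in $\mathbb{R}^N$ gives convergent subsequences, and the residual estimate forces every cluster point $\bar{\f}$ to satisfy $\mathbf{K}\bar{\f}=\hat{\s}$. The obstacle you flag at the end, selecting $\f^*$ among all least-squares solutions, is real in general but dissolves here: the lemma speaks of \emph{the} solution of \eqref{eq:noisefree}, i.e., it presupposes a unique minimizer, equivalently a trivial null space of $\mathbf{K}$ on the relevant set (and the definition $\alpha=\varepsilon^2/\|\f^*\|_1$ only makes sense for a distinguished $\f^*$). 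Under that standing assumption your first fix applies verbatim, $\mathbf{K}\bar{\f}=\hat{\s}=\mathbf{K}\f^*$ gives $\bar{\f}=\f^*$, the subsequence principle upgrades this to convergence of the whole family, and no strengthening of the noise assumption to $o(\varepsilon^2)$ is needed. In short, your route is more elementary (no subdifferential calculus) and, if anything, more complete than the published proof.
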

\begin{proof}
Let us define the diagonal matrix $\mathbf{\Lambda}$  whose diagonal elements are the parameters $\lambda_i$.
The first-order optimality conditions of \eqref{eq:uno} are
\begin{equation}
\{\mathbf{0} \} \in  2\mathbf{K} ^T \left (\mathbf{K} \f - \mathbf{s} \right ) +
2 \mathbf{L}^T\mathbf{\Lambda}\mathbf{L}\f + \alpha \mathbf{g}
\label{eq:neq_1}
\end{equation}
where $\mathbf{g}$ is the subgradient of  $\| \mathbf{f} \|_1$, i.e.:
\begin{equation*}
   g_i = \left \{ \begin{array}{ll}
                   +1, & \text{if } f_i >0 \\
                   -1, & \text{if } f_i <0 \\
                   \pm 1, & \text{if } f_i =0
                  \end{array}, \right.  \quad i=1,\ldots,N.
\end{equation*}
In the limit for $\varepsilon\rightarrow 0$, from \eqref{eq:parametri}, equation \eqref{eq:neq_1} becomes
\begin{equation}
\{\mathbf{0}\} \in  2\mathbf{K} ^T \left (\mathbf{K} \f - \mathbf{s} \right )
\end{equation}
which are the first-order optimality conditions for \eqref{eq:noisefree}.
\end{proof}
\section{The proposed method \label{Upen-L1}}
The computation of the parameters $\lambda_i$, $i=1,\ldots,N$, and $\alpha$ as in \eqref{eq:par} uses the quantities $\mathbf{f}^*$ and $\varepsilon$ which are unknown. For this reason, we propose a splitting iterative procedure where they are respectively approximated by the $k-$th iterate $\mathbf{f}^{(k)}$ and the corresponding residual norm $\|\mathbf{K} \mathbf{f}^{(k)} - \mathbf{s} \|$.
The proposed iterative procedure is outlined in Algorithm~\ref{alg:1} where $\rho$ is a small threshold parameter introduced in order to prevent divisions by zero and $\tau$ is a tolerance of the stopping criterium.
\begin{algorithm}[h]
\caption{\label{alg:1}}
\vskip 1mm
\begin{spacing}{1.15}
\begin{algorithmic}[1]
\STATE Compute a starting guess $\f^{(0)}$;
\STATE Choose $\rho,\tau\in(0,1)$; set $k=0$;
\REPEAT
\smallskip
  \STATE Set $\epsilon^{(k)}=\|\mathbf{K} \mathbf{f}^{(k)} - \mathbf{s} \|^2$
\smallskip
  \STATE Set $\lambda_i^{(k)}=\displaystyle{\frac{\epsilon^{(k)}}{(N+1) (\mathbf{L} \f^{(k)})^2_i+ \rho}}$, $i=1, \ldots N$
\smallskip
  \STATE Set $\alpha^{(k)} = \displaystyle{\frac{\epsilon^{(k)}}{(N+1)  \| \f ^{(k)}\|_1}}$
\smallskip
  \STATE Compute
  \begin{equation*}
    \f ^{(k+1)}= \arg \min_{\f} \; \left \{ \|\mathbf{K} \mathbf{f} - \mathbf{s} \|^2  + \sum_{i=1}^N \lambda^{(k)}_i(\mathbf{L}\mathbf{f})_i^2 +\alpha^{(k)} \|\mathbf{f}\|_1  \right \}
  \end{equation*}
  \STATE Set $k=k+1$
\UNTIL{$\|\f ^{(k+1)} - \f ^{(k)} \| \leq \tau \|  \f ^{(k)} \|$}
\end{algorithmic}
\end{spacing}
\end{algorithm}

The computation of each new approximate solution $\f ^{(k+1)}$ at step 7 of Algorithm ~\ref{alg:1} is obtained by FISTA \cite{Beck_Teb_2013}, after suitable reformulation of the minimization problem.\\
Let us assume that the values $\lambda_i^{(k)}$, $i=1,\ldots,N$, and $\alpha^{(k)}$ are fixed, then  problem \eqref{eq:uno} can be written as:
\begin{equation}
 \min_{\mathbf{f}} \left \{\Psi_1(\mathbf{f})+ \Psi_2(\mathbf{f})\right \}
 \label{eq:due}
 \end{equation}
where:
\begin{equation*}
  \Psi_1(\mathbf{f})=  \left \|
\begin{pmatrix}
 \mathbf{K} \\
 \sqrt{\mathbf{\Lambda}^{(k)}} \mathbf{L}
 \end{pmatrix} \mathbf{f} - \begin{pmatrix}
 \mathbf{s} \\
 \mathbf{0}
 \end{pmatrix}
\right \|^2, \quad  \mathbf{\Lambda}^{(k)}=\text{diag}(\lambda_i^{(k)})
\end{equation*}
and
\begin{equation*}
  \Psi_2(\mathbf{f})= \alpha^{(k)} \| \mathbf{f} \|_1 .
\end{equation*}
The  FISTA steps for the solution of \eqref{eq:due} are reported in Algorithm \ref{alg:FISTA} where  $\xi$ is a constant stepsize and the starting guess corresponds to solution computed in  Algorithm \ref{alg:1} at the $k$-th step.
%
%
\begin{algorithm}[h]
\caption{-- $\mathbf{f}^{k+1}$ = \textsc{fista\_step}($\xi$,$\mathbf{f}^{(k)}$, $\Psi_1,\Psi_2$)) \label{alg:FISTA}}
\vskip 1mm
\begin{spacing}{1.15}
\begin{algorithmic}[1]
\STATE Set $t_0=1$; $j=0$; $\mathbf{y}^{(1)}=\mathbf{f}^{(k)}$
\REPEAT
  \STATE $j=j+1$
\smallskip
  \STATE $\mathbf{f}^{(j)} = \arg\min_{\mathbf{f}} \left \{  \Psi_2(\mathbf{f}) + \frac{\xi}{2} \left \|  \mathbf{f} - \left (  \mathbf{y}^{(j)} - \frac{1}{\xi} \nabla(\Psi_1(\mathbf{y}^{(j)}))  \right )\right \|_2 \right \} $
\smallskip
  \STATE $t_{j+1} = \frac{1}{2}\left( 1+\sqrt{1+4t_j^2} \right)$
\smallskip
  \STATE $\mathbf{y}^{(j+1)}= \mathbf{f}^{(j)} + \displaystyle{\frac{(t_j^2-1)}{t_{j+1}}} \left(\mathbf{f}^{(j)}-\mathbf{f}^{(j-1)} \right)$
\smallskip
\UNTIL{$(\Psi_1(\mathbf{f}^{(j)}) + \Psi_2(\mathbf{f}^{(j)}) ) \leq \tau_\textsc{fista}$}
\smallskip
 \STATE  $\mathbf{f}^{(k+1)}=\mathbf{f}^{(j+1)} $
\end{algorithmic}
\end{spacing}
\end{algorithm}
%
%
At step 4 of Algorithm \ref{alg:FISTA}, the components of  $\mathbf{f}^{(j)}$  are computed explicitly, element-wise, by means of the soft thresholding operator:
\begin{equation*}
  \mathbf{f}^{(j)}_i = \text{sign}\left (z_i^{(j)}-\frac{\alpha}{\xi} \right) \max\left ( \left |z_i^{(j)} \right |-\frac{\alpha}{\xi} ,0\right ), \ \ i=1, \ldots, N
\end{equation*}
where
\begin{equation*}
  \mathbf{z}^{(j)}= \mathbf{y}^{(j)} -  \frac{1}{\xi} \nabla(\Psi_1(\mathbf{y}^{(j)})).
\end{equation*}
The convergence of FISTA has been proven for any stepsize $\xi$ such that $\xi\geq \mathcal{L}(\Psi_1)$, where $\mathcal{L}(\Psi_1)$ is the Lipschitz constant for the gradient $\nabla\Psi_1$  \cite{Beck_Teb_2013}; i.e:
\begin{equation} \label{eq:Leq}
   \mathcal{L}(\Psi_1) = \lambda_{\max} ( \mathbf{K}^T  \mathbf{K} + \mathbf{L}^T \mathbf{\Lambda}^{(k)} \mathbf{L})
\end{equation}
where $\lambda_{\max}(\mathbf{X})$ represents the maximum eigenvalue of the matrix $\mathbf{X}$. \\
The following theorem shows that an upper bound for $\mathcal{L}(\Psi_1)$ can be easily provided, thus obtaining the convergence of FISTA.
\begin{theorem}
Let  $\sigma_1^{(1)}$ and $ \sigma_1^{(2)} $ be the maximum singular values of the matrices $\mathbf{K}_1$ and $\mathbf{K}_2$, respectively, and let $\lambda_i^{(k)}$ be the local regularization parameters computed at $k$th step of Algorithm \ref{alg:1}, then
the value $\xi$ defined as follows:
\begin{equation}
   \xi=\left (\sigma_1^{(1)} \sigma_1^{(2)} \right )^2 + 64  \max_i | \lambda_i^{(k)} |
   \label{eq:LL}
\end{equation}
satisfies
\begin{equation*}
   \xi \geq  \mathcal{L}(\Psi_1)
\end{equation*}
and it guarantees the convergence of the FISTA method.
 \end{theorem}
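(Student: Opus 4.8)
The plan is to bound the largest eigenvalue of the symmetric positive semidefinite matrix $\mathbf{K}^T\mathbf{K} + \mathbf{L}^T\mathbf{\Lambda}^{(k)}\mathbf{L}$ appearing in \eqref{eq:Leq} by splitting it into its two contributions and estimating each separately. Since both $\mathbf{K}^T\mathbf{K}$ and $\mathbf{L}^T\mathbf{\Lambda}^{(k)}\mathbf{L}$ are symmetric (and, because $\mathbf{\Lambda}^{(k)}$ has nonnegative diagonal, positive semidefinite), Weyl's inequality for the sum of Hermitian matrices yields the subadditivity of the maximum eigenvalue, i.e. $\mathcal{L}(\Psi_1) = \lambda_{\max}(\mathbf{K}^T\mathbf{K} + \mathbf{L}^T\mathbf{\Lambda}^{(k)}\mathbf{L}) \le \lambda_{\max}(\mathbf{K}^T\mathbf{K}) + \lambda_{\max}(\mathbf{L}^T\mathbf{\Lambda}^{(k)}\mathbf{L})$. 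It then suffices to show that the first term equals $(\sigma_1^{(1)}\sigma_1^{(2)})^2$ and the second is at most $64\max_i|\lambda_i^{(k)}|$.

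For the data-fitting block, I would use $\lambda_{\max}(\mathbf{K}^T\mathbf{K}) = \sigma_{\max}(\mathbf{K})^2$ together with the fact that the singular values of a Kronecker product are the pairwise products of the singular values of the factors. Since $\mathbf{K} = \mathbf{K}_2\otimes\mathbf{K}_1$, this gives $\sigma_{\max}(\mathbf{K}) = \sigma_{\max}(\mathbf{K}_2)\,\sigma_{\max}(\mathbf{K}_1) = \sigma_1^{(2)}\sigma_1^{(1)}$, whence $\lambda_{\max}(\mathbf{K}^T\mathbf{K}) = (\sigma_1^{(1)}\sigma_1^{(2)})^2$, which is exactly the first summand in \eqref{eq:LL}.

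For the regularization block, submultiplicativity of the spectral norm, combined with $\mathbf{\Lambda}^{(k)}$ being diagonal, gives $\lambda_{\max}(\mathbf{L}^T\mathbf{\Lambda}^{(k)}\mathbf{L}) = \|\mathbf{L}^T\mathbf{\Lambda}^{(k)}\mathbf{L}\|_2 \le \|\mathbf{L}\|_2^2\,\|\mathbf{\Lambda}^{(k)}\|_2 = \|\mathbf{L}\|_2^2\,\max_i|\lambda_i^{(k)}|$. The remaining task is the purely geometric estimate $\|\mathbf{L}\|_2\le 8$, so that $\|\mathbf{L}\|_2^2\le 64$ reproduces the second summand of \eqref{eq:LL}. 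Because $\mathbf{L}$ is the symmetric five-point discrete Laplacian, its spectral norm equals its spectral radius, which I would bound by Gershgorin's theorem: every row has diagonal entry $-4$ and off-diagonal entries summing in absolute value to at most $4$, so all eigenvalues lie in the disc centred at $-4$ of radius $4$, i.e. in $[-8,0]$. Equivalently, writing $\mathbf{L}$ as the Kronecker sum of two one-dimensional second-difference matrices, each with eigenvalues in $(-4,0)$, yields the same bound.

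The main obstacle is precisely this last estimate, namely pinning down the constant: the value $64$ in \eqref{eq:LL} is dictated by the two-dimensional geometry of $\mathbf{L}$ (it is $8^2$, the $8$ coming from the two spatial directions), so the argument must invoke the explicit stencil of the discrete Laplacian rather than treat $\mathbf{L}$ abstractly. Everything else—Weyl subadditivity, the Kronecker singular-value identity, and spectral-norm submultiplicativity—is standard and routine once the splitting is set up.
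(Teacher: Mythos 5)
Your argument follows the paper's proof essentially step for step: the same eigenvalue subadditivity splitting, the same Kronecker-product singular-value identity giving $(\sigma_1^{(1)}\sigma_1^{(2)})^2$, and the same bound $\lambda_{\max}(\mathbf{L}^T\mathbf{\Lambda}^{(k)}\mathbf{L})\le 64\max_i|\lambda_i^{(k)}|$ via $\|\mathbf{L}\|_2\le 8$. The only difference is that you justify the Laplacian bound explicitly (Gershgorin / submultiplicativity) where the paper simply asserts it as a known property, so your write-up is, if anything, slightly more complete.
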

 \begin{proof}
 By using equation \eqref{eq:Leq} we can majorize $\mathcal{L}(\Psi_1)$ as follows:
\begin{equation*}
  \lambda_{\max} ( \mathbf{K}^T  \mathbf{K} + \mathbf{L}^T \mathbf{\Lambda}^{(k)} \mathbf{L}) \leq  \lambda_{\max} ( \mathbf{K}^T  \mathbf{K}) + \lambda_{max} ( \mathbf{L}^T \mathbf{\Lambda}^{(k)} \mathbf{L})
\end{equation*}
Using the Kronecker product properties of the Singular Value Decomposition (SVD), we have:
\begin{equation}
  \lambda_{\max}(\mathbf{K}^T \mathbf{K})= \lambda_{\max}( (\mathbf{K}_2 \otimes \mathbf{K}_1^T)^T (\mathbf{K}_2 \otimes \mathbf{K}_1^T)) = (\sigma_1^{(1)} \sigma_1^{(2)})^2.
\label{eq:K}
\end{equation}
Concerning the term  $\lambda_{\max}(\mathbf{L}^T \mathbf{\Lambda}^{(k)} \mathbf{L})$ we can apply the
property of the discrete Laplacian matrix $\lambda_{\max}(\mathbf{L}) \le 8 $  hence:
\begin{equation}
\lambda_{\max} ( \mathbf{L}^T \mathbf{\Lambda} \mathbf{L}) \le 64  \max_i | \lambda_i^{(k)} | .
\label{eq:L}
\end{equation}
Finally, collecting the terms \eqref{eq:K} and \eqref{eq:L}, we obtain the value $\xi$ that guarantees the convergence of FISTA steps.
\end{proof}
We observe that, in NMR, the matrices $\mathbf{K}_1$ and $\mathbf{K}_2$ are usually small size and their SVD can be easily performed in order to compute the values $\sigma_1^{(1)}$ and $\sigma_1^{(2)}$.

Following the observations in \cite{Bortolotti_2016}, we apply Algorithm \ref{alg:1}
with the following $L2$ penalty parameters, which have proven to be very efficient in NMR problems:
\begin{equation}\label{eq:lambda2}
    \lambda_i^{(k)} = \frac{\|\mathbf{K} \mathbf{f}^{(k)} - \mathbf{s} \|^2}{(N+1)\left ( \beta_0 +\beta_p
     \underset{\substack{\mu \in I_i}}\max \, (\mathbf{p}^{(k)}_{\mu})^2
 + \beta_c \underset{\substack{\mu \in I_i }} \max \, (\mathbf{c}^{(k)}_{\mu})^2\right )},  \quad i=1,\ldots,N
\end{equation}
%
where
\begin{equation*}
  \mathbf{c}^{(k)}=\mathbf{L}\mathbf{f^{(k)}}, \quad
  \mathbf{p}^{(k)} = \text{vec} \big( \| \nabla \mathbf{F}^{(k)}\|\big), \quad \mathbf{f}^{(k)}=\text{vec}(\mathbf{F}^{(k)})
\end{equation*}
and $\mathbf{F}^{(k)}$ is the $k$-th distribution map (here, $\text{vec}(\mathbf{V})$ denotes the vector obtained by columnwise reordering the elements of a matrix $\mathbf{V}$).
The $I_i$ are the indices subsets related to the neighborhood of the point $i$
and the $\beta$'s are positive parameters; $\beta_0$ prevents division by zero and is a compliance floor,
which should be small enough to prevent under-smoothing, and large enough to avoid over-smoothing.
The optimum value of $\beta_0$, $\beta_c$ and $\beta_p$ can change with the nature of the measured sample.

Finally, the proposed procedure is stated in Algorithm \ref{alg:L1LL2} and is called L1LL2 method, which comes from "method based on $L1$ and Locally adapted $L2$ penalties''. As already discussed in \cite{Bortolotti_2016}, the starting guess $\f^{(0)}$ is computed by applying a few iterations of the Gradient Projection (GP) method to the nonnegatively constrained least squares problem
$$ \min_{\f \geq 0}  \|\mathbf{K} \mathbf{f} - \mathbf{s} \|^2 . $$

\begin{algorithm}[h]
\caption{ -- L1LL2 method \label{alg:L1LL2}}
\vskip 1mm
\begin{spacing}{1.15}
\begin{algorithmic}[1]
\STATE Choose $\tau\in(0,1)$ and $\beta_0,\beta_p,\beta_c>0$
\STATE Set $k=0$ and compute $\mathbf{f}^{(0)}$
\STATE Compute $\sigma_1^{(1)}$ and $\sigma_1^{(2)}$ 
\REPEAT
  \STATE Set $\epsilon^{(k)}=\|\mathbf{K} \mathbf{f}^{(k)} - \mathbf{s} \|^2$
\smallskip
  \STATE Set $\lambda_i^{(k)}=\displaystyle{\frac{\epsilon^{(k)}}{(N+1)\left ( \beta_0 +\beta_p
     \underset{\substack{\mu \in I_i}}\max \, (\mathbf{p}^{(k)}_{\mu})^2
 + \beta_c \underset{\substack{\mu \in I_i }} \max \, (\mathbf{c}^{(k)}_{\mu})^2\right )}}$, $i=1,\ldots,N$
\smallskip
  \STATE Set $\alpha^{(k)} = \displaystyle{\frac{\epsilon^{(k)}}{(N+1) \| \f ^{(k)}\|_1}}$
\smallskip
  \STATE Set $\xi^{(k)}=\left (\sigma_1^{(1)} \sigma_1^{(2)} \right )^2 + 64  \max_i | \lambda_i^{(k)} |$
\smallskip
  \STATE Compute
  $$\f ^{(k+1)}= \hbox{\textsc{fista\_step}}(\xi^{(k)},\mathbf{f}^{(k)},  \|\mathbf{K} \mathbf{f} - \mathbf{s} \|^2  + \sum_{i=1}^N \lambda^{(k)}_i(\mathbf{L}\mathbf{f})_i^2,\alpha^{(k)} \|\mathbf{f}\|_1) $$
\UNTIL{$\|\f ^{(k+1)} - \f ^{(k)} \| \leq \tau \|  \f ^{(k)} \|$}
\end{algorithmic}
\end{spacing}
\end{algorithm}
\section{Numerical Results \label{num-res}}
The analysis of the proposed algorithm is carried out in this section both on synthetic and real NMR relaxation data. The numerical tests  are performed on a PC laptop  equipped with 2,9 GHz Intel Core i7 quad-core, 16 GB RAM. The algorithms are implemented in Matlab R2019b. The values 
$\tau=10^{-3}$ and $\tau_\textsc{fista}=10^{-7}$ have been fixed for the stopping tolerances of L1LL2 (Algorithm \ref{alg:L1LL2}) and FISTA (Algorithm \ref{alg:FISTA}) respectively. 
\subsection{Synthetic data}
Aim of this paragraph is to draw some conclusions about the accuracy and performance of the proposed L1LL2 algorithm.  To this purpose we test L1LL2 algorithm on  synthetic  data that emulates the results of measurement with a 2D IR-CPMG sequence (see \cite{Bortolotti_2016, Bortolotti_2017, micro_meso2018}), by discretizing the  following Fredholm integral equation:
\begin{equation}
  S(t_1,t_2)=\iint_0^\infty k_1(t_1,T_1)k_2(t_2,T_2)F(T_1,T_2) \ dT_1 \ dT_2 + e(t_1,t_2)
\label{model}
\end{equation}
where 
$T_1$, $T_2$ are the  longitudinal and transverse
relaxation times related to the  evolution parameters $t_1$, $t_2$, and
the  kernels $k_1,k_2$ have the following expression:
\begin{equation}
 k_1(t_1,T_1) = 1- 2\exp(-t_1/T_1),  \ \ \ k_2(t_2,T_2)=\exp(-t_2/T_2).
 \label{eq:IRCPMG}
\end{equation}
Two different relaxation maps $F(T_1,T_2)$ are applied to obtain the synthetic relaxation  data. \\
The first relaxation map, named {\tt 2Pks} test, has size $N_1\times N_2$ where $N_1=N_2 =80$.  The  relaxation map, represented in Figure \ref{fig:SY_2}, has two peaks at positions $( T_1=814.97 \ ms,   T_2=4.533 \ ms)$ and $(T_1 = 119.54 \ ms, T_2=    8.5606 \ ms)$, relative to two population spins with $T_1 > T_2$.\\
The second relaxation map, named {\tt 3Pks} test, has  $100 \times 100$ points.
In this case the relaxation map, represented in  Figure \ref{fig:SY_3}, has three peaks, relative to  three population spins, with peaks in the following positions: $(T_1=1582.2 \ ms,    T_2=32.289 \ ms)$, $(T_1=5.9692 \ ms,   T_2=2.6124 \ ms)$ and $(T_1=1139.5 \ ms,   T_2= 258.08 \ ms)$. \\
In both cases the length of the IR sequence is $M_1= 128$ while the CPMG sequence has length $M_2 = 2048$.\\
Normal Gaussian random noise  $\mathbf{e}$ of level $\delta\equiv\|\mathbf{e}\|$ is added as follows:
$\mathbf{s}  = \mathbf{K} \mathbf{f^*}+ \mathbf{e}$.
%
\begin{figure}
\centering
\subfigure[Two peaks map]{\label{fig:SY_2}\includegraphics[width=0.48\textwidth]{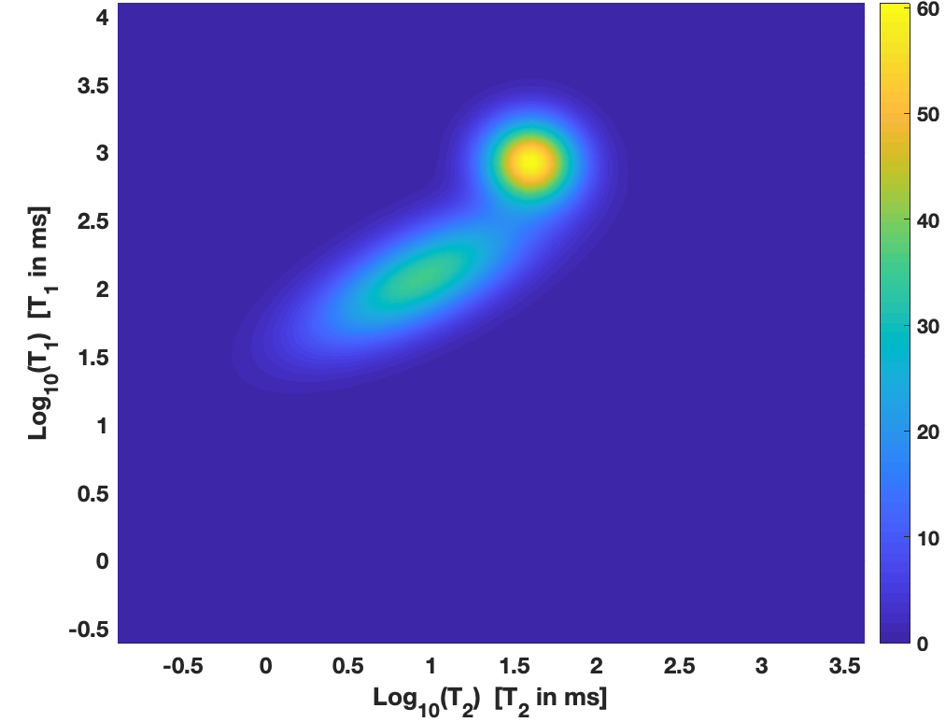}}
\subfigure[Three peaks map]{\label{fig:SY_3}\includegraphics[width=0.48\textwidth]{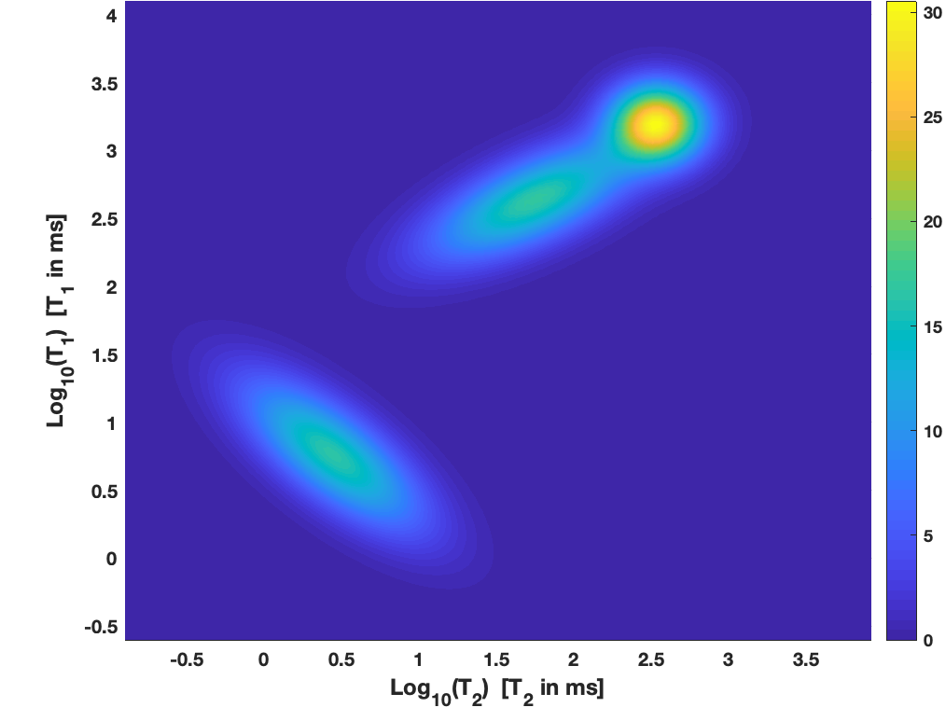}}
\caption{Maps of relaxation times used in tests with synthetic data.}
\label{fig:Synth_maps}
\end{figure}
%
%
The tests are carried out by running $10$ noise realizations
with  $\delta =10^{-2} $ and the reported numerical results are  averaged over these noise realizations.\\
The algorithms accuracy is measured by means of the relative error $Erel^2$ and  the Root Mean Squared Deviation $RMSD$, defined as follows:
\begin{equation}
Erel^2 = \| \mathbf{f} - \mathbf{f}^* \|^2 /  \| \mathbf{f}^* \|^2, \ \ \
RMSD = \frac{\| \mathbf{\hat s}  - \mathbf{s} \|}{\sqrt{M}}, \ \ \ \mathbf{\hat s} =\mathbf{K} \mathbf{ f}
\end{equation}
where $\mathbf{f}$ represents the map computed by the algorithms and $\mathbf{f}^*$ is the true map.
The first analysis evaluates the effects of the multi-penalty and multi-parameter approach (L1LL2) compared to the L1 and multi-parameter L2 penalties.
The  adapted  L1 penalty algorithm (A\_L1) is obtained applying  algorithm \ref{alg:1}  to solve \eqref{eq:uno}  with $\lambda_i =0 $, 
while multi-parameter L2  is obtained by solving problem \eqref{eq:L2} with algorithm 2DUPEN.
In table \ref{tab:synth}, the error parameters and computation times are reported for each algorithm. 
While 2DUPEN reaches always the most accurate solutions, the A\_L1 algorithm has the greatest relative error values. Conversely, A\_L1 is the fastest method while 2DUPEN requires the longest computation times.
%
\begin{table}
\centering
\begin{tabular}{clccccc}
\multirow{2}{*}{Test} & \multirow{2}{*}{Algorithm}        & $Erel^2 $ & $RMSD$  & Time \\
                                &                                                  &  (-)              & (a.u.)  & (s) \\
\hline
     & L1LL2 & $1.22 \ 10^{-1} $ & $1.953 \ 10^{-4}$  &  $10.80$ \\
2Pks & A\_L1 & $1.41 \ 10^{-1} $& $ 1.953 \ 10^{-4}$  &  $10.13 $\\
     & 2DUPEN       & $8.79 \ 10^{-2}$ & $1.953 \ 10^{-4}$ & $386 $\\
\hline
           & L1LL2 & $1.09 \ 10^{-1} $ & $1.381 \ 10^{-4}$  & $29.00 $\\
 3Pks      & A\_L1  &   $1.31 \ 10^{-1} $ & $1.381 \ 10^{-4}$ & $ 19.84  $\\
           & 2DUPEN      & $8.51 \ 10^{-2}$& $1.381 \ 10^{-4}$ &  $85.62 $\\
\hline
\end{tabular}
\caption{Accuracy and computation times of  the  synthetic tests. Reference value $RMSD^*=\delta / \sqrt{M}  =  1.9531 \ 10^{-5}$. }
\label{tab:synth}
\end{table}
%
 We observe that L1LL2 achieves the best trade-off between accuracy and computation time.
We can quantify such trade-off in terms of Percentage Accuracy Loss (PAL), obtained subtracting the $Erel^2$ of 
2DUPEN,  to that of each algorithm:
\begin{equation}
PAL_m = 100 \frac{Err_m- Err_{min}}{Err_{min}}
\label{eq:PAL}
\end{equation}
where  $Err_m$ represents  the relative error of method $m$ (L1LL2 or L1) and $Err_{min}$ is the minimum relative error, always obtained by 2DUPEN. 
Analogously, we measure the Percentage Efficiency Gain (PEG) by subtracting the computation time of 
 L1LL2 or L1  ($Time_m$) to that 2DUPEN ($Time_{max}$):
 \begin{equation}
PEG_m = 100 \frac{Time_{max}-Time_m}{Time_{max}}
\label{eq:PEG}
\end{equation}
The values reported in table \ref{tab:effi} show that for the 2pks test the accuracy lost by L1LL2 is about $22\%$ smaller than L1 while the computation efficiency  gained is similar. Concerning the 3Pks test we observe that 
accuracy lost by L1LL2 is about $25\%$ smaller than L1 while the performance gained by L1  is $ 11\%$  greater than that of L1LL2. Hence   L1LL2 reaches the best balance between accuracy and computation time. 
\begin{table}
\centering
\begin{tabular}{cccc}
Test & Algorithm        & PAL  & PEG  \\
\hline
 \multirow{2}{*}{2Pks}   & L1LL2 & $38.8 \%$  & $97.2 \%  $ \\
           &       A\_L1 & $60.5 \% $&  $97.4  \%$ \\
  \multirow{2}{*}{3Pks}   & L1LL2 & $28.1  \% $& $66.1 \% $ \\
           &       A\_ L1 & $53.9  \% $& $78.0 \%$ \\
\hline
\end{tabular}
\caption{Percentage Accuracy loss, PAL \eqref{eq:PAL}, and percentage performance gain (PEG \eqref{eq:PEG})  obtained by each method on the different test problems.}
\label{tab:effi}
\end{table}
This feature is well represented in figures \ref{fig:errs}  where the time evolution of  $Erel^2 $ is plotted  for each algorithm.
We observe that the introduction of the  $L_1$ regularization causes a considerable decrease in  
the total computation time at the expenses of a slight increase in the relative error. \\
\begin{figure}[hbtp]
\centering
\subfigure[Two peaks test]{\label{fig:E_2}\includegraphics[width=0.48\textwidth]{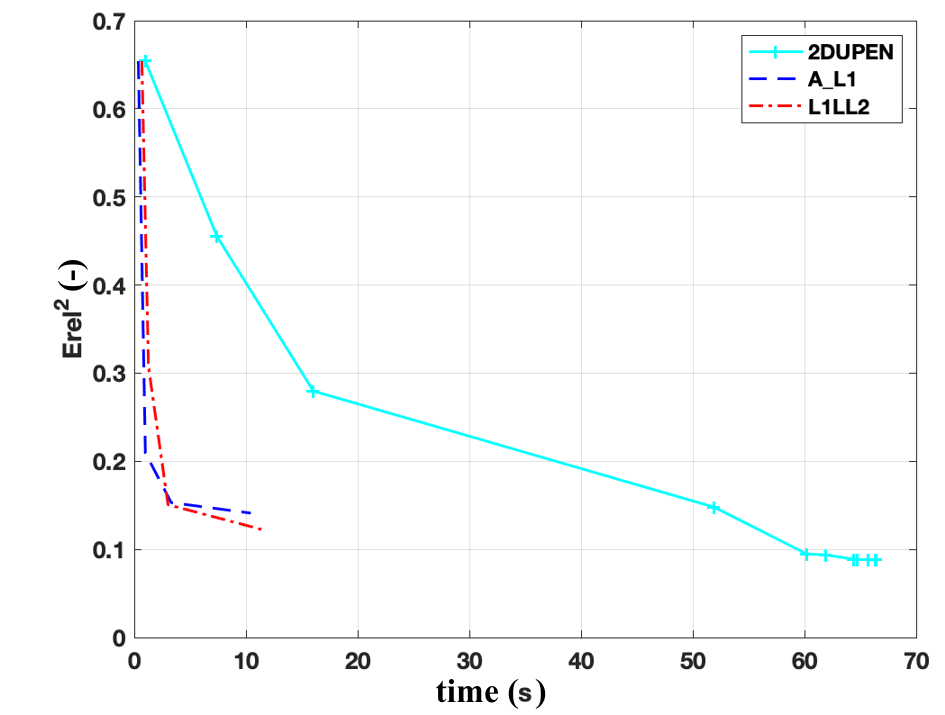}}
\subfigure[Three peaks test]{\label{fig:E_3}\includegraphics[width=0.48\textwidth]{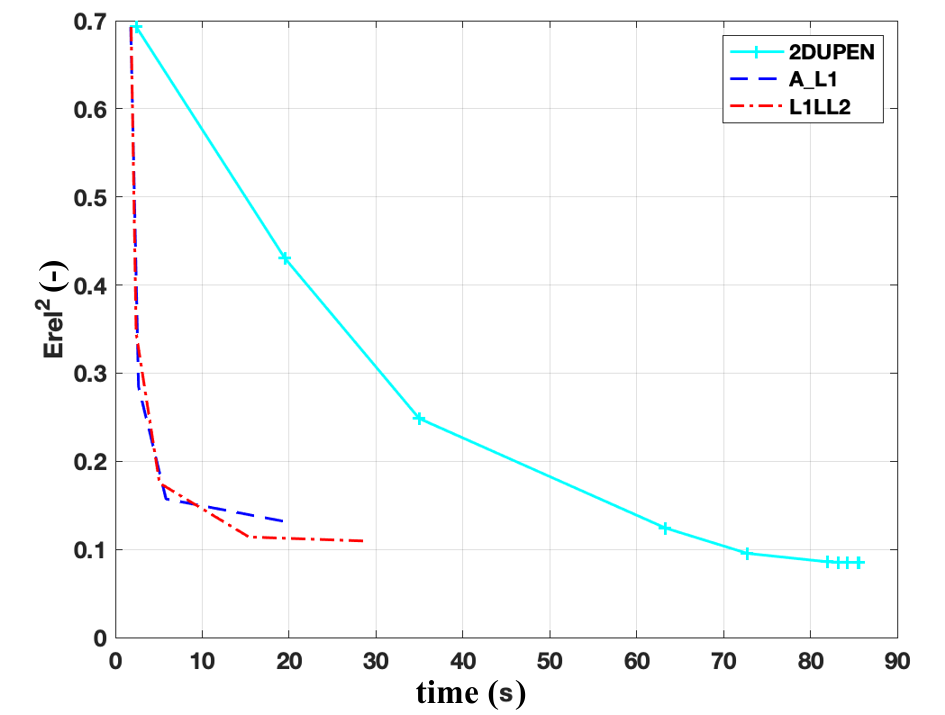}}
\caption{Relative Error vs. computation time: 2DUPEN cyan line, A\_L1, blue dashed line, L1LL2 red dash-dotted line.}
\label{fig:errs}
\end{figure}
The contribution of the multi-parameter $L2$ term to the algorithm accuracy  is further highlighted in figures \ref{fig:Cont1_2pks} and \ref{fig:Cont1_3pks}.
We observe that figures (b) and (d) are more precise in reproducing the true contour levels (figure (a)), compared to  the A\_L1 regularization algorithm in figure (c). \\
Concerning the residual values, we observe that the  RMSD parameters reported   in table \ref{tab:synth} have indeed very tiny differences, in the range    $[10^{-8} , 10^{-7}]$,  revealing  equal data consistency  for all methods.  Moreover the good results are confirmed  by  the RMSD correspondent to the true solution $\mathbf{f}^* $, given by  $RMSD^*=\delta / \sqrt{M}  =  1.9531 \ 10^{-5}$.  
%
\begin{figure}[hbtp]
\centering
\subfigure[True]{\includegraphics[width=0.48\textwidth]{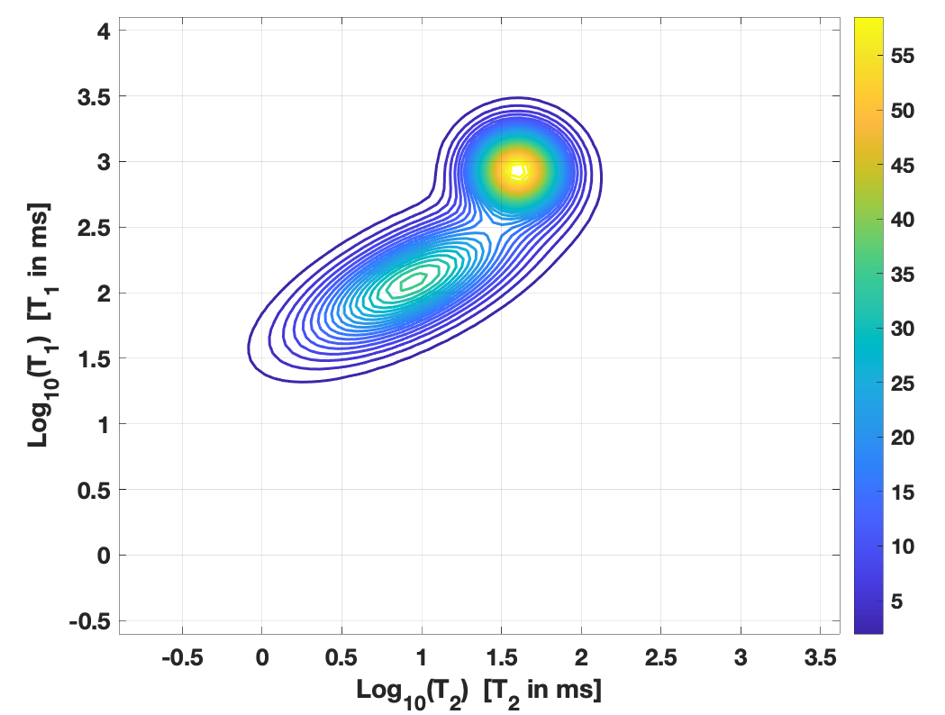}}
\subfigure[L1LL2]{\includegraphics[width=0.48\textwidth]{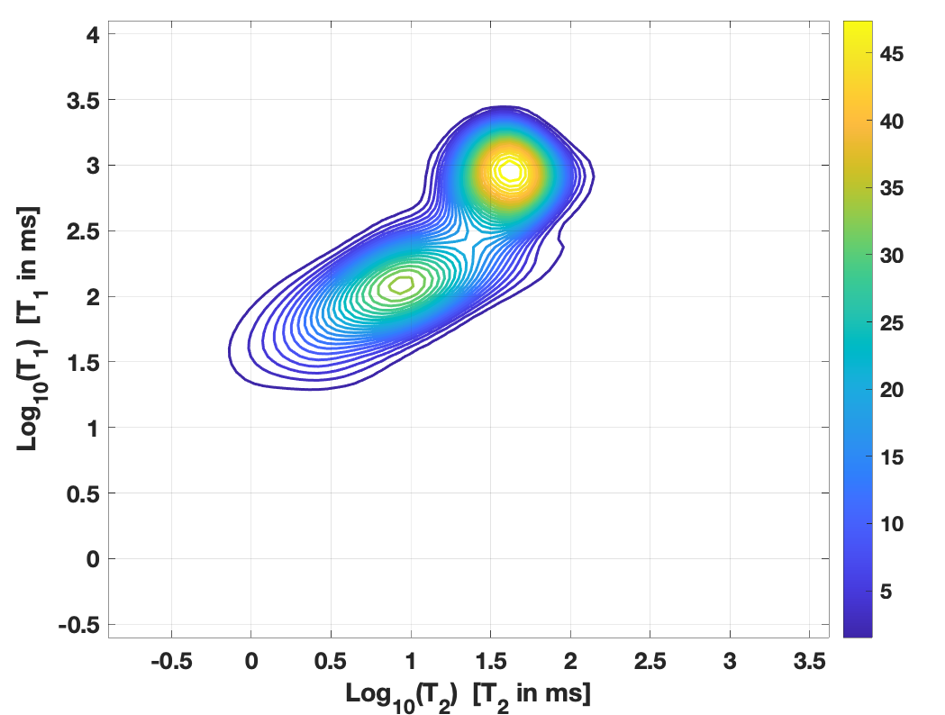}} \\
\subfigure[A\_L1]{\includegraphics[width=0.48\textwidth]{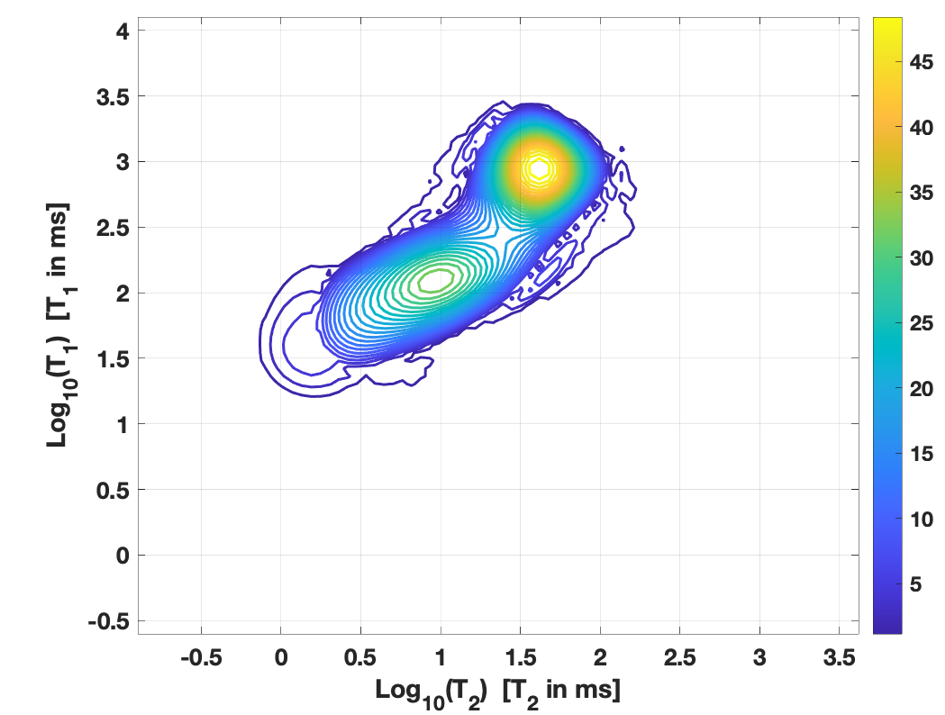}}
\subfigure[2DUPEN]{\includegraphics[width=0.48\textwidth]{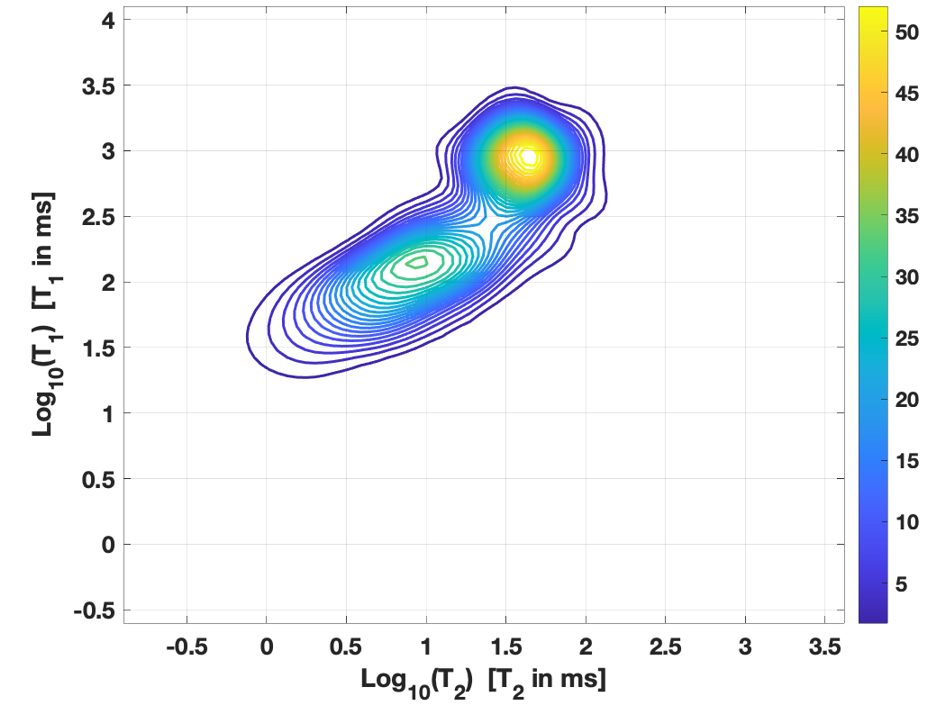}}
\caption{2pks test contour plots }\label{fig:Cont1_2pks}
\end{figure}
\begin{figure}[hbtp]
\centering
\subfigure[True]{\includegraphics[width=0.48\textwidth]{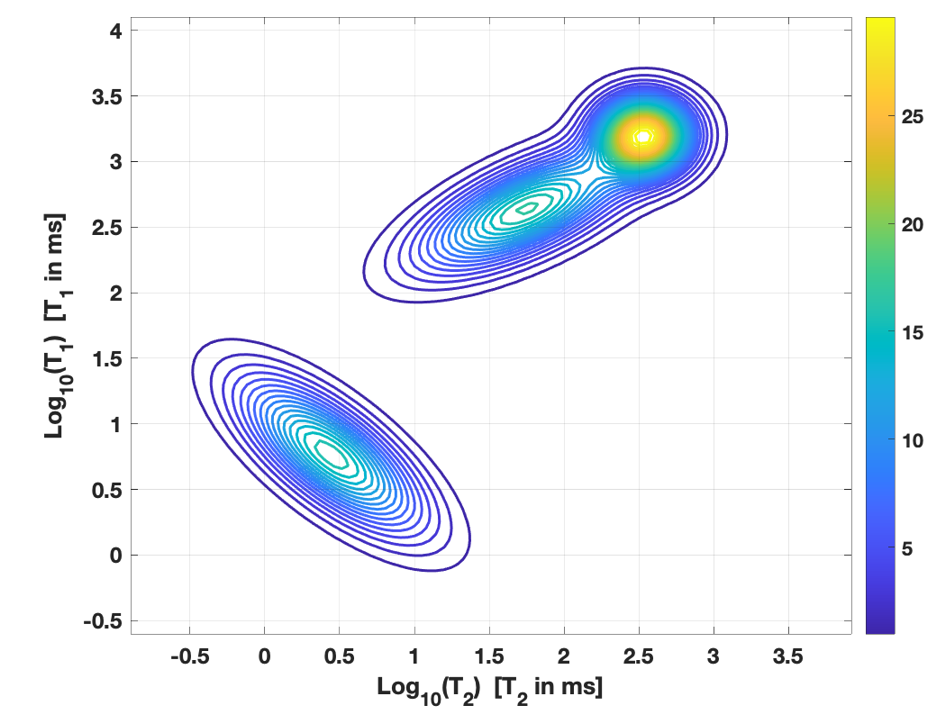}}
\subfigure[L1LL2]{\includegraphics[width=0.48\textwidth]{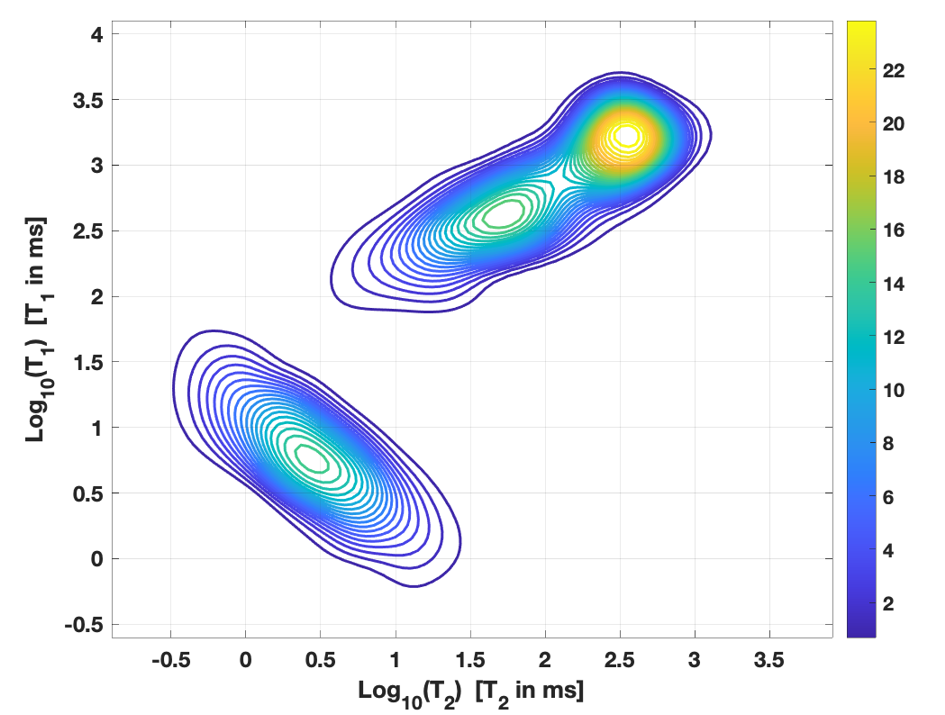}} \\
\subfigure[A\_L1]{\includegraphics[width=0.48\textwidth]{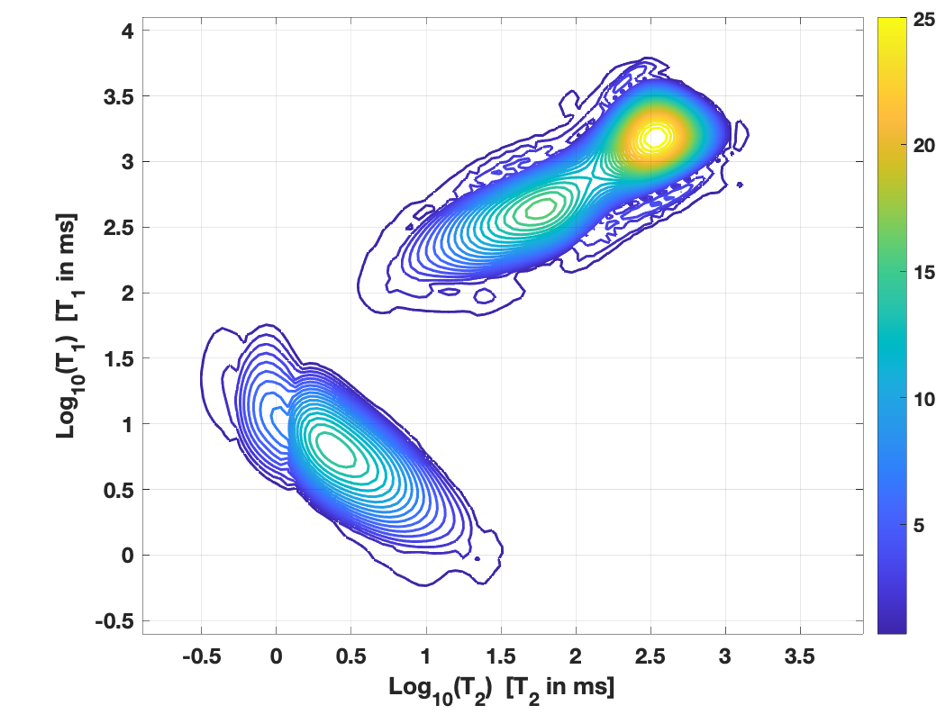}}
\subfigure[2DUPEN]{\includegraphics[width=0.48\textwidth]{Cont_Upen_L1_3pks.png}}
\caption{3pks test contour plots}\label{fig:Cont1_3pks}
\end{figure}
\subsection{Real data}
In this section we report the reconstructions obtained by L1LL2 with real data acquisitions (see  \cite{Bortolotti2019}, 
for a  detailed description of the samples  and acquisition modalities).\\
%
The first test, named $T_1-T_2$ test,  is relative to an  IR-CPMG sequence, with kernels 
defined in \eqref{eq:IRCPMG}, of $48 \times 1000$ data points, and  reconstructed relaxation map with $80 \times 80$ points.
The second test, named $T_2-T_2$ test is related to a CPMG-CPMG sequence, where the kernels in \eqref{model} have the following expressions: 
 $$k_1(t_1,T_{1})= \exp(-t_1/T_{1}),  \ \ \ k_2(t_2,T_{2})=\exp(-t_2/T_{2}) . $$
The reconstructed map has  $64 \times 64$ points  while the data sequence has  $128 \times 2800$ elements.
Since 2DUPEN has produced  the most accurate solutions on  different sequences and samples (see \cite{Bortolotti_2016}, \cite{Bortolotti_2017}, \cite{micro_meso2018} and \cite{Bortolotti2019})
we use it  as a reference method. \\
\begin{table}
\centering
\begin{tabular}{cccc}
\multirow{2}{*}{Test} & \multirow{2}{*}{Algorithm}        & RMSD  & Time  \\
       &                         & (a.u.)  & s. \\
\hline
 \multirow{2}{*}{$T_1-T_2$}   & L1LL2 & $3.328 \ 10^{-3} $  & 3.5 \\
           &       2DUPEN & $3.333 \ 10^{-3} $ &   54.6 \\
  \multirow{2}{*}{$T_2-T_2$}   & L1LL2 & $1.248 \ 10^{-1} $ & 3.6  \\
           &       2DUPEN & $1.250 \ 10^{-1} $&  10.6  \\
\hline
\end{tabular}
\caption {RMSD and computation times obtained with 2D NMR real data.}
\label{tab:Tab_data}
\end{table}
We observe in table \ref{tab:Tab_data} that for each test problem the RMSD values are very similar, confirming that L1LL2 preserves the data consistency as 2DUPEN. 
Moreover, the computation times show the improved efficiency of L1LL2, as expected.\\
In case of of $T_1-T_2$ test we can see the optimal correspondence with 2DUPEN in the 1D maps projections reported in figure \ref{fig:T1_A}
where both peaks are well localized in position, height and amplitude.
The contour maps of the computed 2D relaxation  time distributions, reported in figure \ref{fig:T1T2_data},
also confirm the good accuracy of L1LL2 compared to 2DUPEN.\\
Concerning the $T_2-T_2$ test, we again observe in table \ref{tab:Tab_data} the preservation of data consistency.    The contour maps of the relaxation times, reported in figure \ref{fig:T2T2_data}, 
report a precise reproduction of  spin population with the largest relaxation times, confirmed by the projection on the horizontal axis 
in figure \ref{fig:T1_B_2}. 
\begin{figure}[hbtp]
\centering
\subfigure[L1LL2]{\label{fig:t1_t2_1} \includegraphics[width=0.47\textwidth]{./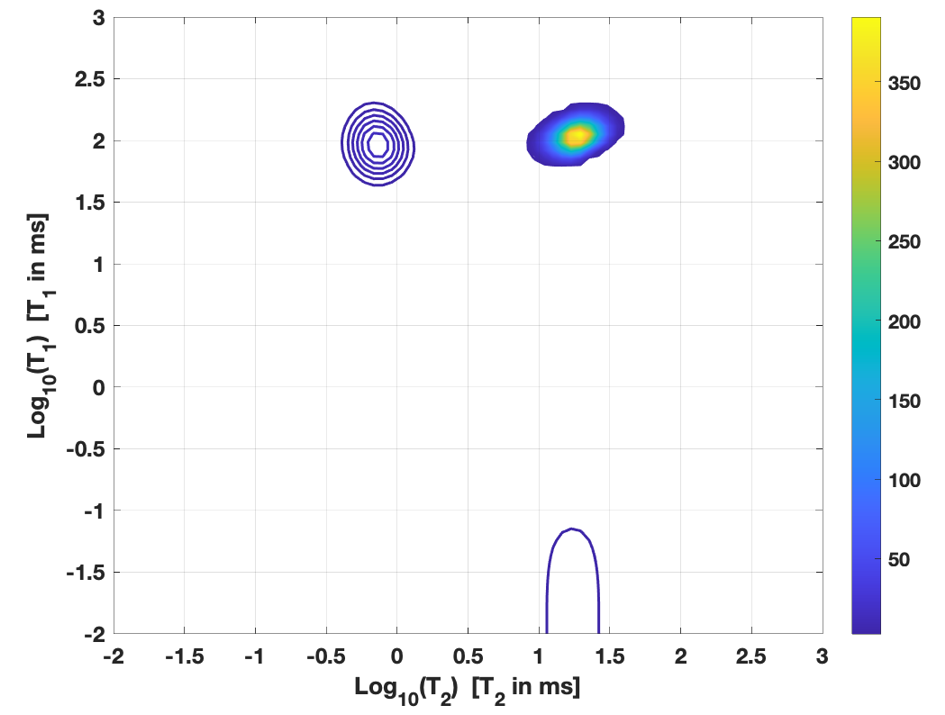} }
\subfigure[2DUPEN]{\label{fig:t1_t2_2} \includegraphics[width=0.47\textwidth]{./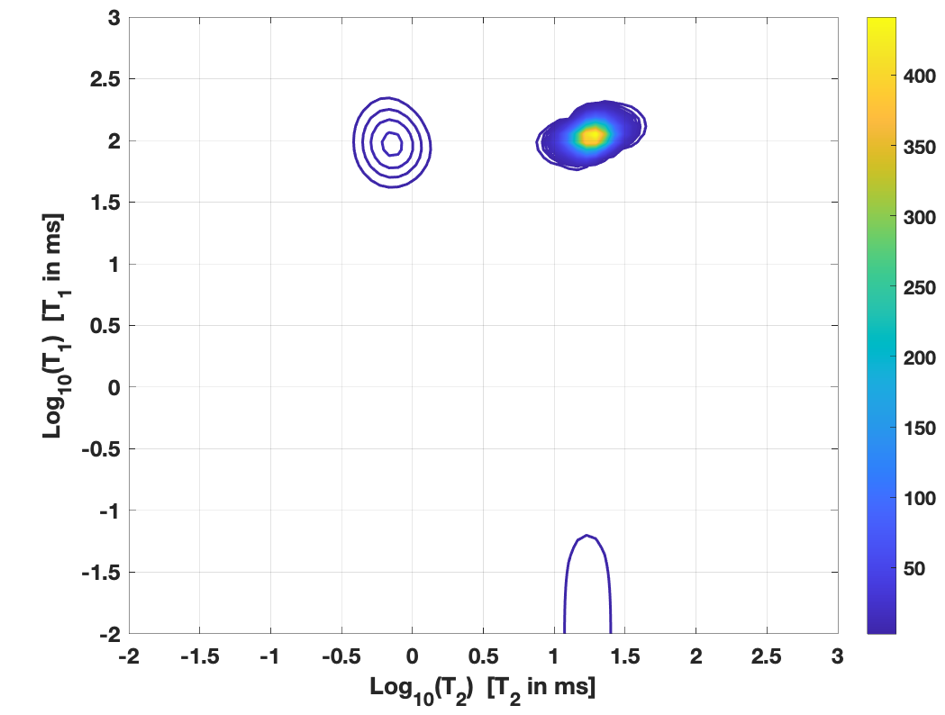} }
\caption{Test $T_1-T_2$, contour maps of relaxation times. (a) computed in $3.53 \ s$ (b) computed in
$54.6 \ s.$}
\label{fig:T1T2_data}
\end{figure}
\begin{figure}[hbtp]
\centering
\subfigure[$T_1$]{\label{fig:T1_A_1} \includegraphics[width=0.47\textwidth]{./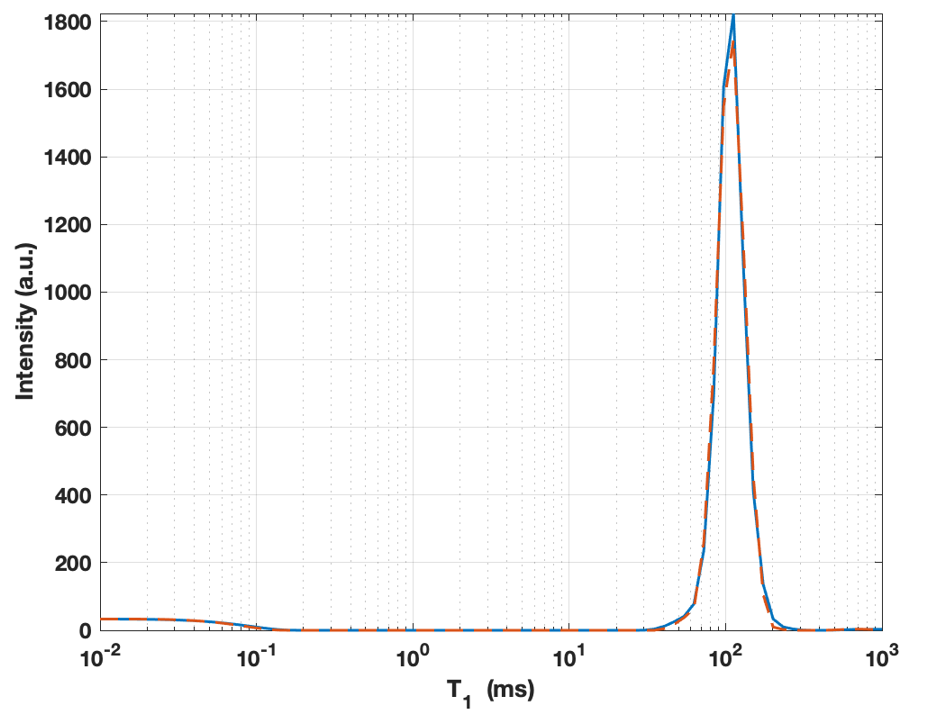}}
\subfigure[$T_2$]{\label{fig:T1_A_2} \includegraphics[width=0.47\textwidth]{./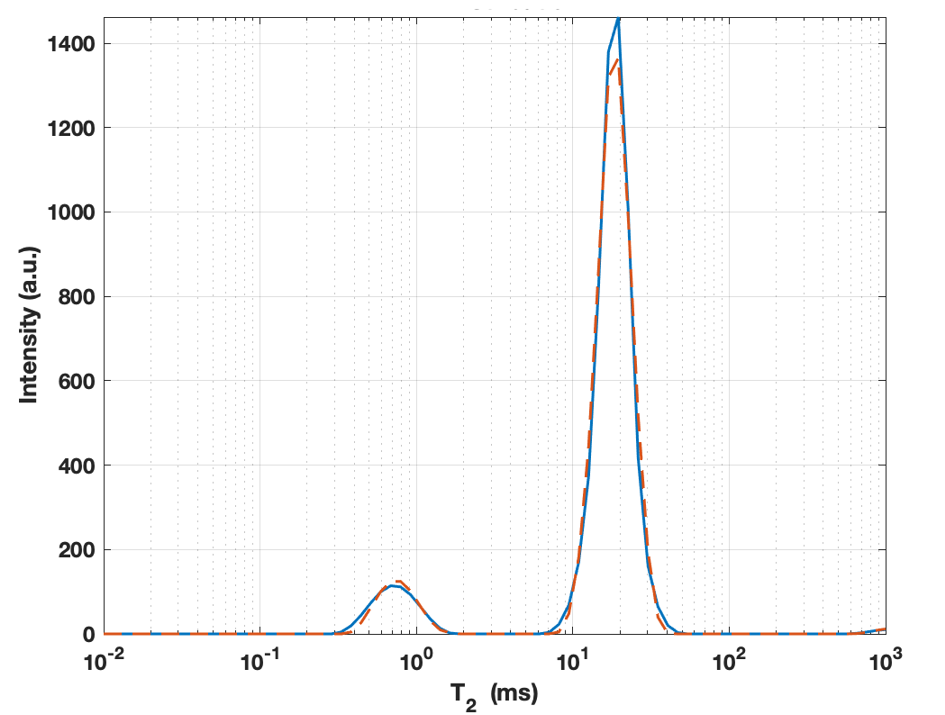}}
\caption{$T_1-T_2$ test, projection of the time relaxation maps onto the vertical axis $T_1$ and onto the horizontal axis $T_2$.L1LL2 red dashed line, 2DUPEN blue line.}
\label{fig:T1_A}
\end{figure}
Concerning two smaller populations it is evident in figure \ref{fig:T1_B_1}  that relaxation maps are not coincident, however in this case
L1LL2 provides a better separation of the spin populations with smaller relaxation times.
Finally  we observe again that L1LL2  is computationally more efficient than 2DUPEN.

\begin{figure}[hbtp]
\centering
\subfigure[L1LL2]{\label{fig:t2_t2_1} \includegraphics[width=0.47\textwidth]{./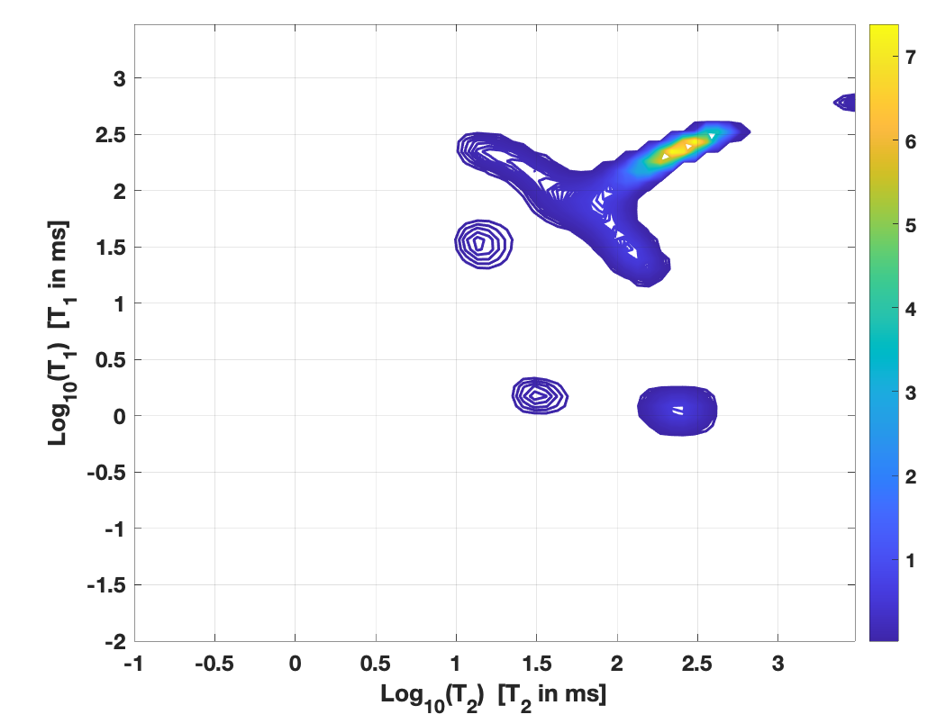} }
\subfigure[2DUPEN]{\label{fig:t2_t2_2} \includegraphics[width=0.47\textwidth]{./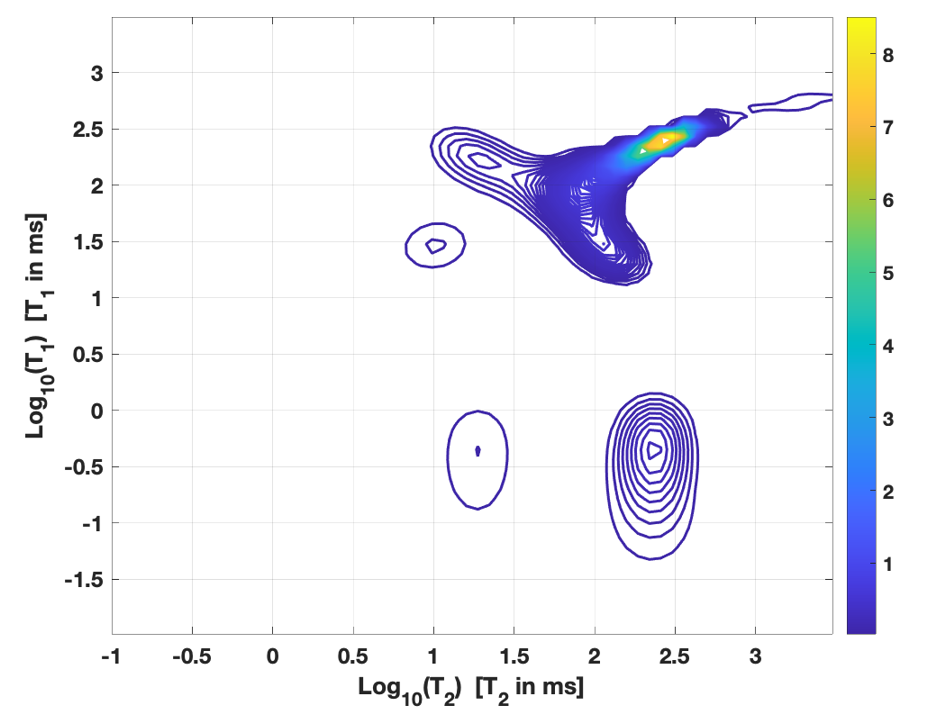} }
\caption{Test $T_2-T_2$, contour maps of relaxation times. (a) computed in $3.6 \ s$ (b) computed in $10.3 \ s$}
\label{fig:T2T2_data}
\end{figure}
\begin{figure}[hbtp]
\centering
\subfigure[$T_{21}$]{\label{fig:T1_B_1} \includegraphics[width=0.48\textwidth]{./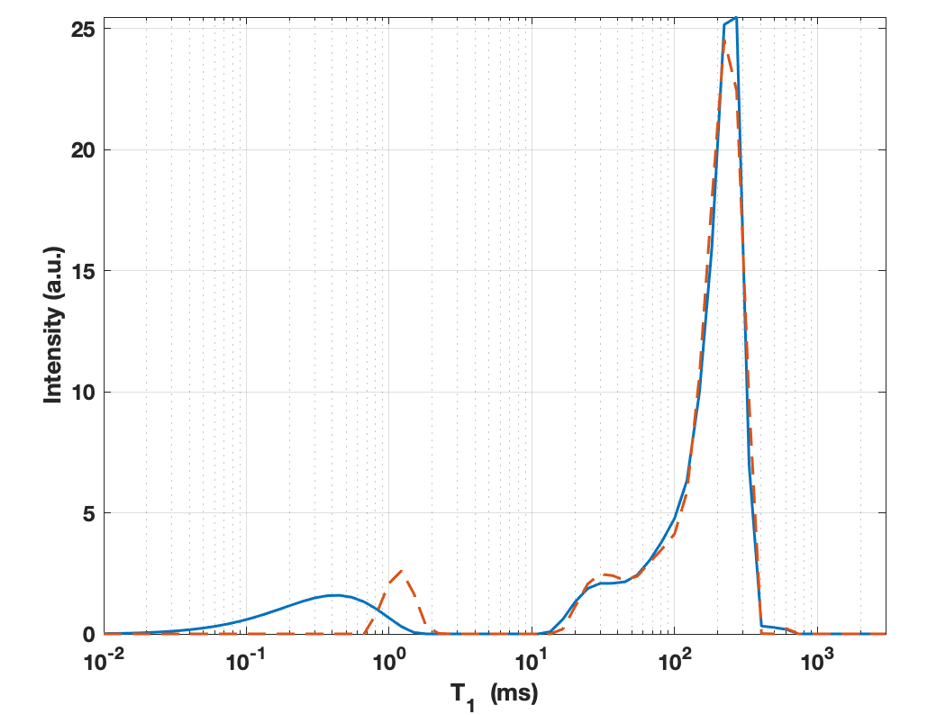}}
\subfigure[$T_{22}$]{\label{fig:T1_B_2} \includegraphics[width=0.48\textwidth]{./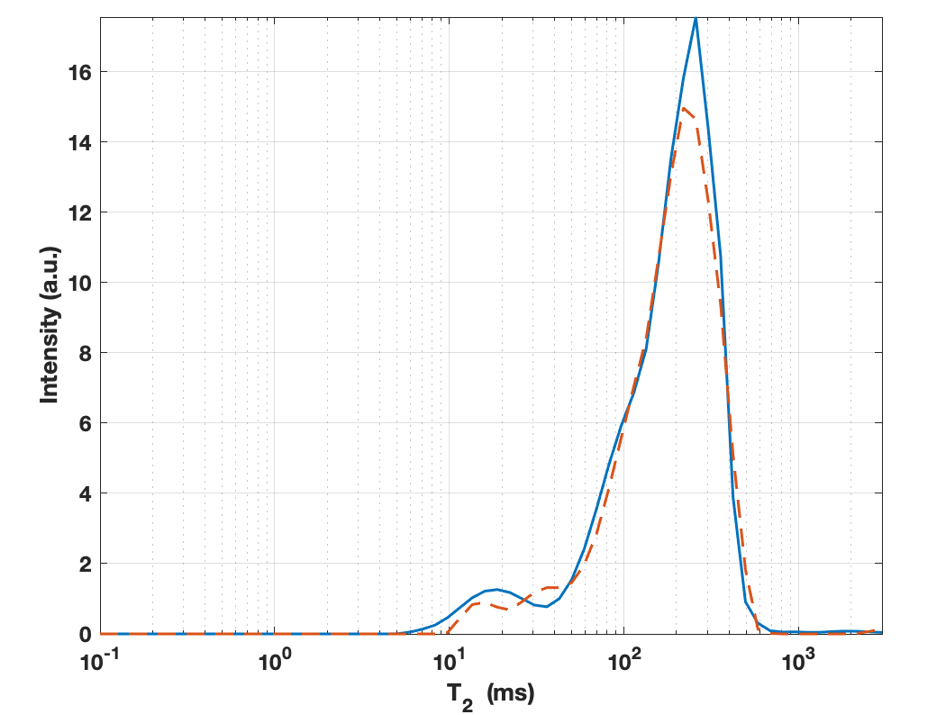}}
\caption{T2-T2 test, projection of the time relaxation maps onto the vertical axis $T_{21}$ and onto the horizontal axis $T_{22}$. L1LL2 red dashed line, 2DUPEN blue line.}
\label{fig:T1_B}
\end{figure}

\section{Conclusion}
This paper presents the  L1LL2  method for the inversion of 2DNMR relaxation data.
The algorithm automatically computes a 2D distribution of relaxation times and spatially adapted regularization parameters by iteratively solving a sequence of multi-penalty problems. The FISTA method is used for the solution of the minimization problems, and all the regularization parameters are updated according to
the uniform penalty principle. The L1LL2 method has been compared to A\_L1 and 2DUPEN. 
The numerical results show that, compared to 2DUPEN, the main advantage of L1LL2 is the increased computational speed without a significant loss in the inversion accuracy; compared to A\_L1, L1LL2 provides more accurate distributions at a comparable computational cost. 
Future work will consider the extension of this method to different penalty functions to employ the generalized uniform Penalty principle to a broader variety of inverse problems.

\section*{Acknowledgements}
This work was partially supported by Gruppo Nazionale per il Calcolo Scientifico - Istituto Nazionale di Alta Matematica (GNCS-INdAM).

\section*{References}

\end{document}